\theoremstyle{thmstyleone}%
\newtheorem{theorem}{Theorem}
\newtheorem{corollary}[theorem]{Corollary}
\newtheorem{proposition}[theorem]{Proposition}%
\theoremstyle{thmstyletwo}%
\newtheorem{example}{Example}%
\newtheorem{remark}{Remark}%
\theoremstyle{thmstylethree}%
\newtheorem{definition}{Definition}%
\DeclareMathOperator*{\Limsup}{Lim\; sup}
\def\bd{\mbox{\rm bd}\,}
\def\epi{\mbox{\rm epi}\,}
\def\tgph{\mbox{\scriptsize {\rm gph}}\,}
\def\gph{\mbox{\rm gph}\,}
\def\dom{\mbox{\rm dom}\,}
\def\cone{\mbox{\rm cone}\,}
\def \N{\mathbb{N}}
\def \R{\mathbb{R}}
\def \B{\mathbb{B}}
\begin{document}

\title[On new generalized differentials wrt a set and applications]{On new generalized differentials with respect to a set and their applications}


\author*[1,2]{\fnm{Xiaolong} \sur{Qin}}\email{qxlxajh@163.com}

\author[2,3]{\fnm{Vo} \sur{Duc Thinh}}\email{vdthinh@dthu.edu.vn}

\author[2,4]{\fnm{Jen-Chih} \sur{Yao}}\email{yaojc@math.nsysu.edu.tw}

\affil[1]{\orgdiv{School of Mathematical Sciences}, \orgname{Hangzhou Normal University}, \orgaddress{
\city{Hangzhou}, \postcode{311121},
\state{Zhejiang}, \country{China}}}

\affil[2]{\orgdiv{Department of Mathematics}, \orgname{Zhejiang Normal University}, \orgaddress{
\city{Jinhua}, \postcode{321004},
\state{Zhejiang}, \country{China}}}

\affil[3]{\orgname{Dong Thap University}, \orgaddress{
\city{Cao Lanh}, \postcode{81000},
\state{Dong Thap}, \country{Vietnam}}}

\affil[4]{\orgdiv{Research Center for Interneural Computing}, \orgname{China Medical University Hospital}, \orgaddress{
\city{Taichung}, \postcode{40402},
 \country{Taiwan}}}


\abstract{The notions and certain fundamental characteristics of the proximal and limiting normal cones with respect to a set are first presented in this paper. We present the ideas of the limiting coderivative and subdifferential with respect to a set of multifunctions and singleton mappings, respectively, based on these normal cones. The necessary and sufficient conditions for the Aubin property with respect to a set of multifunctions are then described by using the limiting coderivative with respect to a set. As a result of the limiting subdifferential with respect to a set, we   offer the requisite optimality criteria for local solutions to optimization problems. In addition, we also provide  examples to demonstrate the outcomes.}

\keywords{Aubin property with respect to a set; Normal cone with respect to a set; Coderivative with resect to a set; Subdifferential with respect to a set; Optimality condition.}


\pacs[Mathematics Subject Classification]{49J53, 90C30, 90C31}

\maketitle

\section{Introduction}
\label{intro}
The study of the {\it well-posedness}, including the {\it Aubin property} (also known as the {\it local Lipschitz-like property}), {\it metric regularity}, etc., of multifunctions or/and the solution mapping to parameter systems, has been undoubtedly recognized as an essential problem in both variational theory and its applications \cite{AF91,BGO2017,Bonnans2000, ChuongKim-MOR-2016,DR14, G13, GO16,Ioffe2010,Mor93, Mor06, PR98, Roc96, Yen95}. These properties are defined in primal spaces and do not appear to be related to generalized differentials, however generalized differentials are essential to characterize the Aubin property, or/and the metric regularity of multifunctions. More detailed, the Aubin property of $F$ around $(\bar x,\bar y)\in \gph F$ is defined as follows:
a multifunction $F:\mathbb R^n\rightrightarrows \R^m$ is called to have the {\it Aubin property} ({\it local Lipschitz-like property}) around $(\bar x,\bar y)\in \gph F$ if there exist $\kappa>0,$ called an {\it Aubin constant}, and neighbourhoods $U$ of $\bar x,$ $V$ of $\bar y$ such that
$$F(x)\cap V\subset F(u)+\kappa\Vert x-u\Vert\mathbb B, \; \forall x,u\in U,
$$
where $\mathbb B$ is the closed unit ball of $\mathbb R^m.$ The Aubin property of a multifunction was completely characterized, in \cite[Theorem~5.7]{Mor93}, through the limiting coderivative, a type of generalized differentials for multifunctions presented by Mordukhovich et al. This result, known as the Mordukhovich criterion, states that a multifunction $F$ has the Aubin property around $(\bar x,\bar y)\in \gph F$ if and only if
$$
D^*F(\bar x,\bar y)F(0)=\{0\}.
$$
In this case, we have
$$
\mathfrak{a}F(\bar x,\bar y)=\Vert D^*F(\bar x,\bar y)\Vert:=\sup\left\{\Vert x^*\Vert\mid (y^*,x^*)\in \gph D^*F(\bar x,\bar y), y^*\in \mathbb B\right\}.
$$
where $D^*F(\bar x,\bar y)$ is the limiting coderivative mapping of $F$ at $(\bar x,\bar y)$, and $\mathfrak{a}F(\bar x,\bar y)$ is the exact Aubin constant of $F$ around $(\bar x,\bar y)$, which is the infimum of the set of Aubin constants of $F$ at that point. The Aubin property contains a   strong requirement that the value of $F(x)$ for $x$ close to $\bar x$ must be nonempty. This means that the Aubin property cannot be satisfied on the boundary points of the $\dom F$, so sometimes this property has some disadvantage in applications, such as for optimality conditions. For this reason, numerous authors have introduced generalized versions of the Aubin property, including the {\it Aubin property} (also known as {\it local Lipschitz-like property}) {\it with respect to a set}; see \cite{Roc98} for details.

Recently, by using a new type of coderivative, namely the projectional coderivative, the authors in \cite{MLYY23} established necessary and sufficient characteristics for the Aubin property with respect to a set (under the name Lipschitz-like with respect to a set) of multifunctions. In that paper, the authors  introduced the projectional subdifferential of singleton mappings and presented its applications to testing the locally Lipschitz continuous property with respect to a set of single-valued mappings.  Meanwhile, by computing (or estimating) the projectional coderivative for the solution mapping of parameter systems, the authors in \cite{YY22} provided the necessary and sufficient conditions for the Aubin property of these solution mappings under some constraint qualifications. Formulas for calculating (estimating) of the projection coderivative of multifunctions were also stated in some special cases in \cite{YMLY23}. We call the projectional coderivative and sudifferential to be projectionally generalized differentials. Although they are very useful, there are several things to keep in mind around these generalized differentials.
The first, because the projectional differentials (including the projectional coderivative and subdifferential) were defined through the projection of limiting normal cones onto tangent cones to a set, the calculation formulas, for example sum rules, of the projectional coderivative are also through the projector of sets (see \cite{YMLY23}). However, it is frequently challenging to determine the projectors of sets, particularly non-convex sets.
The second, it is known that the limiting generalized differentials are not only applied to investigate the Aubin property but also to study optimality conditions for optimization problems. A question arises  here: ``how can the projectional differentials be applied to study optimality conditions for optimization problems?''

\medskip

The motivation of our research stems from the observations above. However, we do not intend to solve these issues through the projectional differentials, but rather we will introduce a new type of generlaized differentials, generlaized differentials with respect to a set. By using these new generlaized differentials, we give equivalent characteristics of the Aubin property with respect to a set for multifunctions. In addition, we also use limiting generalized differentials with respect to a set to provide necessary optimality conditions for optimization problems with geometric constraints. One needs to know that necessary optimality conditions due to the limiting subdifferential is not as sharp as the one of the directional limiting subdifferential, a concept introduced by Ginchev and Mordukhovich in \cite{GM11}, and then developed by these authors in \cite{GM11} and by Gfrerer in \cite{G13} (see \cite{TC18,TC19} also). So, the providing necessary optimality conditions due to these new limiting generalized differentials may also obtain the similar results as the directional limiting subdifferential.

\medskip

In this work, we first present the notions and some basic properties of the proximal and limiting normal cones with respect to a set. These new normal cones with respect to a set are different from normal cones with respect to a set introduced in \cite{GM13,TC18}. On the basis of the limiting normal cone with respect to a set, we propose the concept of the limiting coderivative with respect to a set of multifunctions. We then apply the limiting coderivative with respect to a set to characterize the necessary and sufficient condition for the Aubin property with respect to a set of multifunctions. Besides, we also introduce the notions of the proximal and limiting subdifferentials with respect to a set, and we provide necessary optimality conditions for local solutions of optimization problems due to these subdifferentials. In final, we present some examples to illustrate obtained results.
\section{Preliminaries}
Through out of this paper, we assume that all of considered spaces are finite dimension spaces with Euclidean norm $\Vert\cdot\Vert$ and  scalar product $\langle \cdot,\cdot\rangle.$ Let $\mathbb R^n, \mathbb R^m, \R^s$ be Euclidean spaces with norms $\Vert \cdot\Vert_{n}$, $\Vert\cdot\Vert_m$ and $\Vert\cdot\Vert_s$, respectively. Their Cartesian product $\mathbb R^n\times \mathbb R^m$ is also equipped an Euclidean norm defined by $\Vert (x,y)\Vert:=\sqrt{\Vert x\Vert_n^2+\Vert y\Vert_m^2}.$ We sometimes use $\Vert\cdot\Vert$ for any Euclidean space if there is no the confusion. The closed unit ball in $\R^s$ is signified by $\B$, while $\B(x,r)$ is used to denote the closed ball centered at $x$ with radius $r>0.$
We define $$\R^s_+:=\left\{x=(x_1,\ldots, x_s)\in\R^s\mid x_i\ge 0\; \forall i=1,\ldots, s\right\}.$$ Given a real numbers sequence $(t_k)$, we write $t_k\to 0^+$ if $t_k\to 0$ and $t_k>0$ for all $k.$ Let $\Omega$ be a nonempty set in the finite dimension space $\mathbb R^s$ and $\bar x\in\Omega$. We denote the {\it interior} and the {\it boundary} of $\Omega$ respectively by ${\rm int}\, \Omega$ and $\bd \Omega.$ The {\it cone} of $\Omega$ is defined by $\cone(\Omega):=\R_+\Omega:=\left\{\lambda \omega\mid \lambda\ge 0, \omega\in\Omega\right\}.$
We use the notation $x_k\xrightarrow{\Omega}\bar x$ to say that $x_k\to \bar x$ and $x_k\in \Omega$ for all $k\in \N.$ The {\it distance function} to $\Omega$, $d_{\Omega}:\mathbb R^s\to \mathbb R,$ is defined by
$$
d_{\Omega}(x):=\inf_{u\in \Omega}\Vert u-x\Vert\; \text{ \rm for all } x\in \mathbb R^s.
$$
An element $u\in \Omega$ satisfying $d_{\Omega}(x)=\Vert u-x\Vert$ is called a (Euclidean) {\it projector} (or {\it closest point}) of $x$ onto $\Omega.$ The multifunction $\Pi(\cdot, \Omega): \mathbb R^s\rightrightarrows\mathbb R^s, x\mapsto \Pi(x,\Omega):=\left\{u\in \mathbb R^s\mid \Vert u-x \Vert=d_{\Omega}(x)\right\}$ is called the {\it set-valued mapping projection} onto $\Omega$, and the set $\Pi(x,\Omega)$ is called the {\it Euclidean projector set} of $x$ onto $\Omega.$ Note that $\Pi(x,\Omega)$ can be empty. However, if $\Omega$ is closed, then $\Pi(x,\Omega)\ne \emptyset$ for any $x\in \mathbb R^s.$ Given $u\in \Omega,$ we define $\Pi^{-1}(u,\Omega):=\left\{x\in \mathbb R^s\mid u\in \Pi(x,\Omega) \right\}$, which is always nonempty (in fact, $u\in \Pi^{-1}(u,\Omega)$ for all $u\in \Omega).$

Let $\bar x\in \Omega\subset \mathbb R^n$. The {\it proximal} and {\it Fréchet/regular normal cones} to $\Omega$ at $\bar x$ are respectively  given (see \cite[Definition~1.1 and page 240]{Mor06}, or \cite[page 22]{CLSW98}) by
$$
N^p(\bar x,\Omega):=\cone[\Pi^{-1}(\bar x,\Omega)-\bar x]
$$
and
$$
\hat N(\bar x,\Omega):=\left\{x^*\in \mathbb R^n\mid \limsup_{x\xrightarrow{\Omega}\bar x}\dfrac{\langle x^*, x-\bar x\rangle}{\Vert x-\bar x\Vert}\le 0\right\}.
$$
Let $\Omega$ be locally closed around $\bar x$. The {\it limiting normal cone} to $\Omega$ at $\bar x$ is defined by
$$
N(\bar x,\Omega):=\Limsup_{x\to\bar x}\Big(\cone[x-\Pi(x,\Omega)]\Big).
$$
The {\it tangent/contingent cone} to $\Omega$ at $\bar x$ is given by
$$
T(\bar x,\Omega):=\left\{v\in\R^n\mid \exists t_k\to 0^+, v_k\rightarrow\bar v \text{ \rm such that } \bar x+t_kv_k\in \Omega, \forall k\in\N \right\}.
$$

\begin{remark}\label{rem1}
{\rm (i)} We obtain  by the definition (see \cite[Example~6.16]{Roc98}) that $x^*\in N^{p}(\bar x,\Omega)$ if and only if there exists $t_0>0$ such that $\bar x\in \Pi(\bar x+t_0x^*,\Omega)$ and $\{\bar x\} = \Pi(\bar x + tx^*,\Omega)$ for all $t\in (0,t_0).$ Moreover, it is also known (see \cite[page 240]{Mor06} or \cite[Proposition~1.5]{CLSW98}) that \begin{align*}
N^p(\bar x,\Omega)&=\left\{x^*\mid \exists \delta>0 \text{ \rm such that }\langle x^*, x-\bar x\rangle\le \delta\Vert x-\bar x\Vert^2\; \forall x\in \Omega\right\}\\
&=\left\{x^*\mid \exists \delta,\delta>0 \text{ \rm such that }\langle x^*, x-\bar x\rangle\le \delta\Vert x-\bar x\Vert^2\; \forall x\in \Omega\cap\B(\bar x,\theta)\right\}.
\end{align*}

{\rm (ii)} It is known in \cite[page 240]{Mor06} that $N^p(\bar x,\Omega)\subset \hat N(\bar x,\Omega)$ and $$N(\bar x,\Omega)=\Limsup_{x\to \bar x}\hat N(x,\Omega)=\Limsup\limits_{x\to \bar x} N^p(x,\Omega).$$

  {\rm (iii)} It is also  known that $N^{p}(\bar x ,\Omega)$ is a convex cone, but it is not closed (see \cite[page~240]{Mor06}) in general.
\end{remark}

Let $f:\R^n\to\bar\R:=\R\cup\{\infty\}$ be an extended real-valued mapping. We respectively denote the domain and the epi-graph of $f$ by
$$
\dom f:=\left\{x\in\R^n\mid f(x)<\infty\right\} \text{ \rm and } \epi f:=\left\{(x,\alpha)\mid \alpha\ge f(x)\right\}.
$$
Given $\mathcal C\subset \R^n,$ we define $f_{\mathcal C}:\R^n\to\bar\R$ by $$f_{\mathcal C}(x):=\begin{cases}f(x)&\text{ \rm if } x\in \mathcal C,\\
\infty &\text{ \rm otherwise.}\end{cases}$$
The mapping $f$ is called {\it lower semi-continuous around} $\bar x$ if $\epi f$ is a locally closed set around $(\bar x,f(\bar x))$, that is, there exists $r_0>0$ such that, for any $r\in (0,r_0)$, $\epi f\cap \B((\bar x,f(\bar x)), r)$ is a closed set. In this case, we write $f\in \mathcal F(\bar x).$ We say that $f$ is {\it lower semi-continuous} and denote $f\in \mathcal F$ if $\epi f$ is closed. The mapping $f$ is called to be {\it level-bounded} if for any $\alpha\in \R$, the set $\{x\in\R^n\mid f(x)\le \alpha\}$ is bounded (possibly empty).

Consider the set-valued mapping $F:\mathbb R^n\rightrightarrows\mathbb R^m.$ We use
$$
\dom F:=\left\{x\in \R^n\mid F(x)\ne \emptyset\right\}\; \text{ \rm and } \gph F:=\left\{(x,y)\in\R^n\times\R^m\mid y\in F(x)\right\}
$$ to signify the {\it domain} and {\it graph} of $F.$ The {\it sequential Painlev\'{e}-Kuratowski upper/outer limit} of $F$ at $\bar x\in \dom F$ is given by
$$
\Limsup_{x\to\bar x}F(x):=\left\{y\in \R^m\mid \exists x_k\to \bar x, y_k\to y \text{ \rm with } y_k\in F(x_k) \; \text{ \rm for all } k\in \N\right\}.
$$

Given $\mathcal C\subset \R^n,$ we define the set-valued mapping $F_{\mathcal C}:\R^n\rightrightarrows\R^m$ as follows:
$$
F_{\mathcal C}(x):=\begin{cases}
F(x)& \text{ \rm if } x\in \mathcal C,\\
\emptyset & \text{ \rm otherwise.}
\end{cases}
$$
Then,
$$
\dom F_{\mathcal C}=\dom F\cap\mathcal C \; \text{ \rm and } \;  \gph F_{\mathcal C} = \gph F\cap(\mathcal C\times\R^m).
$$
The {\it epigraphical/profile mapping} of $F$, denoted by $\mathcal E^F:\R^n\rightrightarrows\R^m,$ is defined by
$$
\mathcal E^F(x):=F(x)+\R^m_+\; \text{ \rm for all }x\in \R^n.
$$
If $F:\R^n\rightrightarrows\R$ is a single-valued mapping, i.e., $F(x)=\{f(x)\}$ for all $x\in \R^n$, then $\gph\mathcal E^F=\epi f.$

\medskip

Based on the limiting normal cone to sets, the limiting corderivative of multifunctions and the limiting subdifferential of single-valued mappings were introduced in \cite{Roc98,Mor06}.

\begin{definition}  {\rm Let $F:\R^n \rightrightarrows\R^m$ and $f:\R^n\to \bar\R:=\R\cup\{\infty\}$.}

{\rm (i) \cite[Definition~3.32]{Mor06} The {\it limiting coderivative} (also known as the {\it normal coderivative}) of $F$ at $(\bar x,\bar y)\in \gph F$ is the multifunction $D^*F(\bar x,\bar y): \R^m\rightrightarrows\R^n$ which is defined by $$D^*F(\bar x,\bar y)(y^*):=\left\{x^*\in\R^n\mid (x^*,-y^*)\in N((\bar x,\bar y),\gph F)\right\} \; \text{ \rm for all } y^*\in \R^m.$$
}
{\rm (ii) \cite[Definition~1.77]{Mor06} The {\it limiting subdifferential}  of $f$ at $\bar x\in \dom f$ is given by $$\partial f(\bar x):=\left\{x^*\in\R^n\mid (x^*,-1)\in N((\bar x,f(\bar x)),\epi f)\right\}.$$
}
\end{definition}

A version with respect to a set of the limiting coderivative and subdifferential were presented in \cite{MLYY23,YMLY23,YY22} as the {\it projectional coderivative} and {\it subdifferential}, respectively. We recall these notions as follows.

\begin{definition}  {\rm Let $F:\R^n \rightrightarrows\R^m$ and $f:\R^n\to\bar{\R}.$ Let $\mathcal C$ be a nonempty closed set.}

{\rm (i) \cite[Definition~3.1]{MLYY23} The {\it projectional subdifferential} of $f$ at $\bar x$ with respect to $\mathcal C$, denoted by $\partial\vert_{\mathcal C}f(\bar x),$
is the set of all of $v\in \R^n$ such that there are $x_k\xrightarrow{f+\delta_{\mathcal C}}\bar x, v_k\in \partial (f+\delta_{\mathcal C})(x_k)$ with $\Pi(v_k,T(x_k,\mathcal C))\to v.$ Here, $x_k\xrightarrow{f+\delta_{\mathcal C}}\bar x$ means that $x_k\to\bar x$, $f(x_k)\to f(\bar x)$ and $x_k\in \mathcal C$ for all $k\in\N.$
}

{\rm (ii) \cite[Definition~2.2]{MLYY23} The {\it projectional coderivative} $\mathcal C$ of $F$ at $(\bar x,\bar y)\in \gph F_{\mathcal C}$ with respect to $\mathcal C$ is a multifunction $D^*\vert_{\mathcal C}F(\bar x,\bar y): \R^m\rightrightarrows \R^n,$ defined by
\begin{align*}
&x^*\in D^*\vert_{\mathcal C}F(\bar x,\bar y)(y^*) \text{ \rm if and only if } \\
&(x^*,-y^*)\in \Limsup_{(x,y)\xrightarrow{\tgph F_{\mathcal C}}(\bar x,\bar y)} \Pi(N((x,y),\gph F_{\mathcal C}), T(x,\mathcal C)\times \R^m),\end{align*} where $\Pi(A,B)$ is the projector set of $A$ onto $B$ which is the collection of projectors onto $B$ of all elements in $A.$
}
\end{definition}

The projectional coderivative (resp. projectional subdifferential) with respect to a closed convex set is an effective tool in providing necessary and sufficient conditions for the existence of the Aubin property (resp. local Lipschitz continuity) with respect to that set of multifunctions and parametric systems (resp. single-valued mappings); see \cite{MLYY23,YY22} for details.
The Aubin property with respect to a set (also known as locally Lipschitz-like property with respect to a set in \cite{MLYY23,YY22}) of $F$ was introduced in \cite[Definition~9.36]{Roc98} as follows.

\begin{definition}[\cite{Roc98}, Definition~9.36]
{\rm Let $\mathcal C$ be a nonempty and  closed subset of $\mathbb R^n$, and let $F:\mathbb R^n\rightrightarrows \R^m$. Then $F$ is said  to have the {\it Aubin property with respect to} $\mathcal C$ around $(\bar x,\bar y)\in \gph F$ if $\gph F$ is locally closed around $(\bar x,\bar y)$ and there exist $\kappa\ge 0$ and neighborhoods $V$ of $\bar y$ and $U$ of $\bar x$ such that
\begin{equation}\label{Aubin-pro}
    F(u)\cap V\subset F(x) +\kappa\Vert u-x\Vert \mathbb B\quad  \forall x,u\in \mathcal C\cap U.
\end{equation}
In the case that \eqref{Aubin-pro} holds for $\mathcal C=\dom F$, we say that $F$ has the {\it relative Aubin property} around $(\bar x,\bar y).$
}
\end{definition}

We now introduce the concept of local Lipschitz continuity with respect to set for single-valued functions as follows.

\begin{definition}[\cite{Roc98}, Definition~9.1~(b)] \label{lips-vec}
{\rm Let $\mathcal C$ be a nonempty and closed subset of $\mathbb R^n.$ Let $f:\mathbb R^n\to\bar\R$ and $\bar x\in \dom f.$ Then $f$ is said  to be {\it locally Lipschitz continuous with respect to} $\mathcal C$ around $\bar x$ if
\begin{equation}\label{lips-con}
{\rm lip}_{\mathcal C}f(\bar x):=\limsup_{x,u\xrightarrow[x\ne u]{\mathcal C}\bar x}\dfrac{\vert f(x)-f(u)\vert}{\Vert x-u\Vert}< \infty.
\end{equation}
Here ${\rm lip}_{\mathcal C}f(\bar x)$ is called the {\it exact Lipschitzian constant with respect to}  $\mathcal C$ of $f$ at $\bar x.$ If \eqref{lips-con} holds with $\mathcal C=\dom f$, then $f$ is said  to be {\it relatively locally Lipschitz continuous} around $\bar x.$
}
\end{definition}

\begin{remark}  The relatively local Lipschitz continuity of a scalar mapping is different to its local Lipschitz continuity. Similarly, the relatively Aubin property of a multifunction is also different to its Aubin property. Indeed, let $\Omega$ be a nonempty and closed subset of $\R^n.$ Its {\it scalar indicator} mapping and {\it set-valued indicator} mapping are respectively defined by
$$
\delta_{\Omega}(x):=\begin{cases}
0&\text{ \rm if } x\in\Omega;\\
\infty &\text{ \rm otherwise,}
\end{cases} \; \text{ \rm and } \; \Delta_{\Omega}(x):=\begin{cases}
\{0\}&\text{ \rm if } x\in \Omega;\\
\emptyset & \text{ \rm otherwise.}
\end{cases}
$$
Then $\delta_{\Omega}$ is relatively local Lipschitz countinuous around any $x\in \Omega$ but it is not local Lipschitz countinuous around boundary points of $\Omega.$ Similarly, $\Delta_{\Omega}$ satisfies the relative Aubin property around any $(x,0)\in \gph\Delta_{\Omega}$, but it does not satisfies Aubin property around boundary points $(\bar x,0)$ for any $\bar x\in\bd\Omega.$
\end{remark}

\section{Normal Cones with respect to a Set}

In this section, we first introduce a version with respect to a set of the proximal and limiting normal cones. These are the new concepts and are effective tools to present generalized differentials with respect to a set in the next section.

 \begin{definition}\label{normal-cone-wrt}
{\rm Let $\emptyset\ne \Omega\subset \mathbb R^n$ and $\mathcal C$ be a closed convex set. Let $\bar x\in \Omega\cap \mathcal C$, and let $\Omega$ be locally closed around $\bar x.$
}

 {\rm (i) The {\it proximal normal cone} to $\Omega$ at $\bar x$ {\it with respect to} $\mathcal C$ is defined by
\begin{align}
N^p_{\mathcal C}(\bar x,\Omega):=\big\{x^*\in \mathbb R^n\mid  \exists t>0 \text{ \rm such that }& \bar x+tx^*\in \Pi^{-1}(\bar x,\Omega\cap\mathcal C)\cap\mathcal C\big\}.
\end{align}
}

{\rm (ii) The {\it limiting normal cone} to $\Omega$ at $\bar x$ {\it with respect to} $\mathcal C$ is defined by
$$
N_{\mathcal C}(\bar x,\Omega):=\Limsup_{x\xrightarrow{\Omega\cap\mathcal C}\bar x}N^p_{\mathcal C}(x,\Omega).
$$
}
\end{definition}

\begin{figure}[!ht]
    \centering
    \includegraphics[scale=0.75]{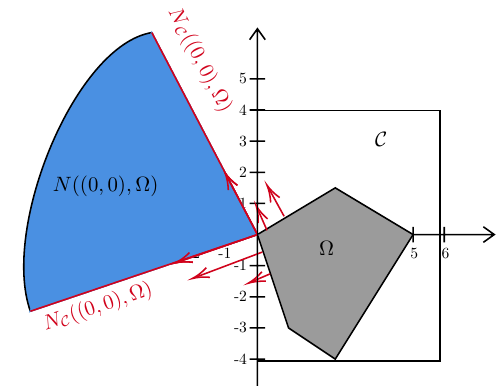}
    \caption{Illustrated example for the limiting normal cone with respect to a set}
    \label{fig:1}
\end{figure}

\begin{remark}
{\rm (i)} In fact, we do not need the local closedness of $\Omega$ in Definition~\ref{normal-cone-wrt}. However, by the applications in this work as well as to facilitate in some calculations, we need the local closedness of $\Omega.$

    {\rm (ii)} It is clear by the definition that  if $\mathcal C$ is a neighbourhood of $x$, then $N^{p}_{\mathcal C}(x, \Omega)$ coincides with $N^{p}(x, \Omega)$, so $N_{\mathcal C}(x, \Omega)=N(x,\Omega).$

    {\rm (iii)} Similar to $N^p(\bar x,\Omega),$ $N^{p}_{\mathcal C}(\bar x ,\Omega)$ is a convex cone (see Proposition~\ref{pro1}), but it is not closed in general.

    {\rm (iv)} It is very important to note that, in the generalized case, we obain
$$
\hat N_{\mathcal C}(x,\Omega):=\left\{x^*\in \mathbb R^n\mid \limsup_{x'\xrightarrow{\Omega\cap\mathcal C} x}\dfrac{\langle x^*, x'- x\rangle}{\Vert x'- x\Vert}\le 0\right\}\supset \hat N(x,\Omega)\; \forall x\in \mathcal C\cap \Omega.
$$
Moreover, we have
\begin{equation}\label{rem1-iii}
    N_{\mathcal C}(\bar x,\Omega) \ne  \Limsup_{x\xrightarrow{\Omega\cap\mathcal C}\bar x}\hat N_{\mathcal C}(x,\Omega)\; \forall \bar x\in \Omega\cap \mathcal C
\end{equation}
in general (see Example~\ref{exam0}). However, $N_{\mathcal C}(\bar x,\Omega)$ can be represented as the sequential Painlevé-Kuratowski outer/upper limit of cones $\cone[x-\Pi(x,\Omega\cap\mathcal C)]$ as $x\xrightarrow{\mathcal C}\bar x.$
\end{remark}

This result and some properties of the proximal and limiting normal cones with respect to a set are demonstrated in the following proposition.

\begin{proposition}\label{pro1}
Let $\Omega$ be a nonempty set, and let $\mathcal C$ be a nonempty and closed convex set in $\mathbb R^s.$ Suppose that $\Omega$ is locally closed around $\bar x$ for the given $\bar x\in \Omega\cap \mathcal C.$ Then the following assertions hold:

{\rm (i)} $N^p_{\mathcal C}(\bar x,\Omega)=\left\{x^*\mid \exists p>0: \bar x+px^*\in \mathcal C, \langle x^*, x-\bar x\rangle\le \dfrac{1}{2p}\Vert x-\bar x\Vert^2 \; \forall x\in \Omega\cap\mathcal C\right\}.$

{\rm (ii)} The proximal normal cone with respect to a set can be computed by
\begin{equation}\label{prox-nor-cone-eq2}
    N^p_{\mathcal C}(\bar x,\Omega)=N^{p}(\bar x,\Omega\cap\mathcal C)\cap\{x^*\mid \exists p>0 \text{ \rm such that } \bar x+px^*\in\mathcal C\}.
\end{equation}
Consequently, we have
\begin{align}\label{prox-nor-cone-eq2a}
    N^p_{\mathcal C}(\bar x,\Omega)=\Big\{x^*\mid &\exists \theta,\delta,p>0 \text{ \rm such that } \bar x+px^*\in \mathcal C\nonumber\\
    &\text{ \rm and } \langle x^*, x-\bar x\rangle\le \delta\Vert x-\bar x\Vert^2 \; \forall x\in \Omega\cap\mathcal C\cap\B(\bar x,\theta)\Big\}.
\end{align}

    {\rm (iii)} $N^p_{\mathcal C}(\bar x,\Omega)$ is a convex cone.

    {\rm (iv)} $N_{\mathcal C}(\bar x,\Omega)$ is a closed cone.

    {\rm (v)} $N_{\mathcal C}(\bar x, \Omega)= \Limsup\limits_{x\xrightarrow{\mathcal C}\bar x}\left(\cone[x-\Pi(x,\Omega\cap\mathcal C)]\right).$
    \end{proposition}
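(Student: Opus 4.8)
The plan is to get (i) by squaring the projection inequality, deduce (ii) and (iii) from (i) together with the convexity of $\mathcal C$, obtain (iv) from the general behaviour of outer limits, and finally prove (v) by a two‑sided sequential argument that is the real work. For (i) I would unwind the definition: $x^*\in N^p_{\mathcal C}(\bar x,\Omega)$ means there is $t>0$ with $\bar x+tx^*\in\mathcal C$ and $\bar x\in\Pi(\bar x+tx^*,\Omega\cap\mathcal C)$. The projection condition reads $\Vert(\bar x+tx^*)-\bar x\Vert\le\Vert(\bar x+tx^*)-x\Vert$ for every $x\in\Omega\cap\mathcal C$; squaring and cancelling $\Vert tx^*\Vert^2$ converts it into $\langle x^*,x-\bar x\rangle\le\frac{1}{2t}\Vert x-\bar x\Vert^2$, which is exactly the inequality in (i) with $p=t$. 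For (ii), the forward inclusion is immediate from (i) (take the constant $\delta=\frac{1}{2p}$ and recall the first description of $N^p(\bar x,\Omega\cap\mathcal C)$ in Remark~\ref{rem1}(i)). For the reverse inclusion, suppose $x^*\in N^p(\bar x,\Omega\cap\mathcal C)$ with constant $\delta$ and $\bar x+p_0x^*\in\mathcal C$ for some $p_0>0$. Here the convexity of $\mathcal C$ enters: since $\bar x,\bar x+p_0x^*\in\mathcal C$, the whole segment gives $\bar x+px^*\in\mathcal C$ for all $p\in[0,p_0]$, so choosing $p=\min\{p_0,\frac{1}{2\delta}\}$ makes both $\bar x+px^*\in\mathcal C$ and the inequality with constant $\frac{1}{2p}\ge\delta$ hold, whence $x^*\in N^p_{\mathcal C}(\bar x,\Omega)$ by (i). Inserting the localized (ball) description of $N^p(\bar x,\Omega\cap\mathcal C)$ from Remark~\ref{rem1}(i) into (ii) yields \eqref{prox-nor-cone-eq2a}.

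For (iii), using \eqref{prox-nor-cone-eq2} I would view $N^p_{\mathcal C}(\bar x,\Omega)$ as the intersection of $N^p(\bar x,\Omega\cap\mathcal C)$, a convex cone by Remark~\ref{rem1}(iii), with the set $\{x^*\mid\exists p>0:\bar x+px^*\in\mathcal C\}$. The latter equals $\cone(\mathcal C-\bar x)$, which is a convex cone because $\mathcal C$ is convex and $\bar x\in\mathcal C$; an intersection of two convex cones is a convex cone. For (iv), I would invoke the standard fact that a Painlev\'e--Kuratowski outer limit is always closed, and check the cone property: if $v_k\in N^p_{\mathcal C}(x_k,\Omega)$ with $x_k\xrightarrow{\Omega\cap\mathcal C}\bar x$ and $v_k\to v$, then for $\lambda\ge0$ the cone property from (iii) gives $\lambda v_k\in N^p_{\mathcal C}(x_k,\Omega)$ and $\lambda v_k\to\lambda v$, so $\lambda v\in N_{\mathcal C}(\bar x,\Omega)$; and since $0$ lies in every $N^p_{\mathcal C}(x_k,\Omega)$, also $0\in N_{\mathcal C}(\bar x,\Omega)$.

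For (v), write $S:=\Omega\cap\mathcal C$ (locally closed around $\bar x$) and set $A:=N_{\mathcal C}(\bar x,\Omega)$ and $B:=\Limsup_{x\xrightarrow{\mathcal C}\bar x}(\cone[x-\Pi(x,S)])$. To show $A\subseteq B$, take $x^*\in A$, so there are $x_k\xrightarrow{S}\bar x$ and $x_k^*\to x^*$ with $x_k^*\in N^p_{\mathcal C}(x_k,\Omega)$; pick $t_k>0$ with $x_k+t_kx_k^*\in\mathcal C$ and $x_k\in\Pi(x_k+t_kx_k^*,S)$. The key adjustment is to shrink $t_k$: by the monotonicity of the proximal‑normal representation in $t$ (Remark~\ref{rem1}(i)) one keeps $x_k\in\Pi(x_k+\tau_kx_k^*,S)$ for every $\tau_k\in(0,t_k]$, and by convexity of $\mathcal C$ the point $x_k+\tau_kx_k^*$ stays on the segment from $x_k$ to $x_k+t_kx_k^*$, hence in $\mathcal C$. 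Choosing $\tau_k=\min\{t_k,1/k\}$ forces $w_k:=x_k+\tau_kx_k^*\to\bar x$ with $w_k\in\mathcal C$, while $x_k^*=\tau_k^{-1}(w_k-x_k)\in\cone[w_k-\Pi(w_k,S)]$ because $x_k\in\Pi(w_k,S)$; thus $x^*\in B$. To show $B\subseteq A$, take $x^*\in B$ with $w_k\xrightarrow{\mathcal C}\bar x$ and $x_k^*=\lambda_k(w_k-u_k)\to x^*$, where $\lambda_k\ge0$ and $u_k\in\Pi(w_k,S)$ (the case $x_k^*=0$ is trivial since $A$ is a cone). Since $\bar x\in S$, $\Vert w_k-u_k\Vert=d_S(w_k)\le\Vert w_k-\bar x\Vert\to0$, so $u_k\to\bar x$ with $u_k\in S$, i.e.\ $u_k\xrightarrow{S}\bar x$. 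Taking $t=1/\lambda_k$ gives $u_k+tx_k^*=w_k\in\mathcal C$ and $u_k\in\Pi(w_k,S)$, so $x_k^*\in N^p_{\mathcal C}(u_k,\Omega)$, and passing to the limit yields $x^*\in A$.

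I expect (v) to be the crux, and within it the inclusion $A\subseteq B$: the normals in $A$ are anchored at points $x_k\in S$ through an \emph{a priori} uncontrolled parameter $t_k$, whereas $B$ demands anchor points (the $w_k$) tending to $\bar x$. Reconciling this needs both the monotonicity of the proximal‑normal representation in $t$ and, crucially, the convexity of $\mathcal C$ to guarantee the shrunken points remain in $\mathcal C$; without convexity the segment argument fails and the two outer limits need not coincide. The remaining steps are routine once (i) is in hand.
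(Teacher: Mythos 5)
Your proposal is correct, and its overall skeleton (squaring the projection inequality for (i), segment argument in $\mathcal C$ for (ii), outer-limit reasoning for (iv), two sequential inclusions for (v)) matches the paper's proof. But it differs in two places worth recording. First, for (iii) the paper verifies convexity directly: given $x_1^*,x_2^*$ with constants $p_1,p_2$, it combines the two inequalities from (i) and shows $\bar x+px^*\in\mathcal C$ for $p=\bigl(\tfrac{\lambda}{p_1}+\tfrac{1-\lambda}{p_2}\bigr)^{-1}$ by writing $\bar x+px^*$ as an explicit convex combination of $\bar x+p_1x_1^*$ and $\bar x+p_2x_2^*$; you instead read convexity off the intersection formula \eqref{prox-nor-cone-eq2}, using that $\{x^*\mid\exists p>0:\bar x+px^*\in\mathcal C\}=\cone(\mathcal C-\bar x)$ is a convex cone and that $N^p(\bar x,\Omega\cap\mathcal C)$ is one too. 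Your route is shorter and makes the role of (ii) structural rather than incidental; the paper's computation is self-contained but longer. Second, and more substantively, in the forward inclusion of (v) the paper takes $\tilde x_k:=x_k+t_kx_k^*\in\Pi^{-1}(x_k,\Omega\cap\mathcal C)\cap\mathcal C$ with the $t_k>0$ supplied by the definition and immediately concludes $x^*\in\Limsup_{x\xrightarrow{\mathcal C}\bar x}\cone[x-\Pi(x,\Omega\cap\mathcal C)]$, without verifying that the anchor points $\tilde x_k$ actually converge to $\bar x$ — which need not hold if $t_k\not\to 0$ (e.g. $t_k\equiv 1$ and $x^*\ne 0$). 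Your shrinking step $\tau_k=\min\{t_k,1/k\}$, justified by the monotonicity in Remark~\ref{rem1}~(i) and the convexity of $\mathcal C$ to keep $x_k+\tau_kx_k^*$ in $\mathcal C$, is exactly what is needed to close this gap, so your version of (v) is the more complete argument; your backward inclusion is the same as the paper's, merely dispensing with the paper's (inessential) appeal to local closedness.
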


\begin{proof} {\rm (i)} Fixing $x^*\in N^p_{\mathcal C}(\bar x,\Omega),$ one sees that there exists $p>0$ such that $\bar x+px^*\in \Pi^{-1}(\bar x,\Omega\cap\mathcal C)\cap \mathcal C.$ Thus, for any $x\in \Omega\cap\mathcal C$,
$$
\Vert \bar x-\bar x-px^*\Vert^2\le \Vert x-\bar x-px^*\Vert^2,
$$
which is equivalent to
$$
p^2\Vert x^*\Vert\le \Vert x-\bar x\Vert^2- 2p\langle x^*, x-\bar x\rangle+ p^2\Vert x^*\Vert^2.
$$
This means that
$$
\langle x^*, x-\bar x\rangle\le \dfrac{1}{2p}\Vert x-\bar x\Vert^2 \; \forall x\in \mathcal C\cap\Omega.
$$

{\rm (ii)} Let $y^*\in N^p_{\mathcal C}(\bar x,\Omega).$ By the definition, we find $p>0$ such that $\bar x+py^*\in \Pi^{-1}(\bar x,\Omega\cap\mathcal C)\cap \mathcal C$, which  implies from the definition of proximal normal cones that $y^*\in N^p(\bar x,\Omega\cap\mathcal C)$. Hence, $y^*\in N^p(\bar x,\Omega\cap\mathcal C)\cap \{x^*\mid \exists p>0 \text{ \rm such that } \bar x+px^*\in\mathcal C\}$, which  means that
$$
N^p_{\mathcal C}(\bar x,\Omega)\subset N^p(\bar x,\Omega\cap\mathcal C)\cap \{x^*\mid \exists p>0 \text{ \rm such that } \bar x+px^*\in\mathcal C\}.
$$
To prove the opposite inclusion, we take $y^*\in N^p(\bar x,\Omega\cap\mathcal C)\cap \{x^*\mid \exists p>0: \bar x+px^*\in\mathcal C\}.$ There exist  by the definition $t_0>0, p_0>0$ such that $\bar x+t_0 y^*\in \Pi^{-1}(\bar x,\Omega\cap\mathcal C)$ and $\bar x+p_0y^*\in \mathcal C$, which  implies from Remark~\ref{rem1}~(i) that $\{\bar x\} =\Pi(\bar x+ty^*,\Omega\cap\mathcal C)$ for all $t\in (0,t_0).$ On the other hand, by the convexity of $\mathcal C$ and the fact that $\bar x\in \mathcal C$, we obtain that  $\lambda \bar x+(1-\lambda)(\bar x+p_0y^*)=\bar x+(1-\lambda)p_0y^*\in \mathcal C$ for any $\lambda\in (0,1),$ which  asserts that $\bar x+py^*\in \mathcal C$ for all $p\in (0,p_0).$ Picking $\bar p:=\min\{t_0,p_0\},$ we see that  $\bar x+\bar py^*\in \Pi^{-1}(\bar x,\Omega\cap\mathcal C)\cap\mathcal C$, which derives that $y^*\in N^p_{\mathcal C}(\bar x,\Omega).$ Therefore,
$$
N^p(\bar x,\Omega\cap\mathcal C)\cap \{x^*\mid \exists p>0: \bar x+px^*\in\mathcal C\}\subset N^p_{\mathcal C}(\bar x,\Omega).
$$

{\rm (iii)} We first prove that $N^{p}_{\mathcal C}(\bar x,\Omega)$ is a cone. It is clear that $0\in N^p_{\mathcal C}(\bar x,\Omega).$ Take $x^*\in N^{p}_{\mathcal C}(\bar x,\Omega)$ and $\lambda>0.$ Then there exists by (i) $p>0$ such that
$$
\bar x+px^*\in \mathcal C\; \text{ \rm and } \langle x^*, x-\bar x\rangle\le \dfrac{1}{2p}\Vert x-\bar x\Vert^2\; \forall x\in \mathcal C\cap\Omega.
$$
Thus, for any $x\in \mathcal C\cap\Omega$,
$$
\bar x+\frac{p}{\lambda}\lambda x^*\in \mathcal C\; \text{ \rm and } \langle \lambda x^*, x-\bar x\rangle = \lambda\langle x^*, x-\bar x\rangle\le \dfrac{\lambda}{2p}\Vert x-\bar x\Vert^2=\dfrac{1}{2\frac{p}{\lambda}}\Vert x-\bar x\Vert^2.
$$
Using (i) again, we conclude $\lambda x^*\in N_{\mathcal C}^{p}(\bar x,\Omega)$, so  $N_{\mathcal C}^p(\bar x,\Omega)$ is a cone.

We now prove  the convexity of $N^{p}_{\mathcal C}(\bar x,\Omega)$. Taking $x_1^*,x_2^*\in N^{p}_{\mathcal C}(\bar x,\Omega)$ and $\lambda\in (0,1),$ we need to obtain that $x^*:=\lambda x_1^*+(1-\lambda)x_2^*\in N^p_{\mathcal C}(\bar x,\Omega).$ Indeed, there exist, by (i), $p_1,p_2>0$ such that the following inequalities hold, for all $x\in \mathcal C\cap \Omega,$
\begin{equation*}
 \langle x_1^*,x-\bar x\rangle\le \dfrac{1}{2p_1}\Vert x-\bar x\Vert^2
\end{equation*}
and
\begin{equation*}
\langle x_2^*,x-\bar x\rangle \le \dfrac{1}{2p_2}\Vert x-\bar x\Vert^2.
\end{equation*}
These, respectively, imply that
\begin{equation}\label{pro1-eq1}
\langle \lambda x_1^*,x-\bar x\rangle\le \dfrac{\lambda}{2p_1}\Vert x-\bar x\Vert^2
\end{equation}
and
\begin{equation}\label{pro1-eq2}
\langle (1-\lambda) x_2^*,x-\bar x\rangle \le \dfrac{1-\lambda}{2p_2}\Vert x-\bar x\Vert^2.
\end{equation}
Combining \eqref{pro1-eq1} and \eqref{pro1-eq2}, we arrive at
\begin{equation*}
\langle x^*,x-\bar x\rangle\le \left(\dfrac{\lambda}{2p_1}+\dfrac{1-\lambda}{2p_2}\right)\Vert x-\bar x\Vert^2\; \forall x\in \Omega\cap\mathcal C,
\end{equation*}
which  is equivalent to, by setting $p:=\dfrac{1}{\frac{\lambda}{p_1}+\frac{1-\lambda}{p_2}},$ the following inequality
\begin{equation}\label{pro1-eq3}
\langle x^*,x-\bar x\rangle\le \dfrac{1}{2p} \Vert x-\bar x\Vert^2\; \forall x\in \Omega\cap\mathcal C.
\end{equation}
We next prove  that $\bar x+px^*\in \mathcal C.$ From the facts that $\bar x+p_1x_1^*$ and $\bar x+p_2x_2^*$   belong to the convex set $\mathcal C$, and $\dfrac{\lambda p_2}{\lambda p_2+(1-\lambda)p_1}+\dfrac{(1-\lambda) p_1}{\lambda p_2+(1-\lambda)p_1}=1,$ we conclude
\begin{equation*}
\dfrac{\lambda p_2}{\lambda p_2+(1-\lambda)p_1}(\bar x+p_1x_1^*)+\dfrac{(1-\lambda) p_1}{\lambda p_2+(1-\lambda)p_1}(\bar x+p_2x_2^*)\in \mathcal C,
\end{equation*}
which implies that
\begin{equation}\label{pro1-eq4}
\bar x+px^* \in \mathcal C.
\end{equation}
Combining \eqref{pro1-eq3} and \eqref{pro1-eq4}, we obtain  $x^*\in N^{p}_{\mathcal C}(\bar x,\Omega)$. Hence, $N^{p}_{\mathcal C}(\bar x,\Omega)$ is convex.

{\rm (iv)} We first prove $N_{\mathcal C}(\bar x,\Omega)$ is a cone. Take $x^*\in N_{\mathcal C}(\bar x,\Omega)$ and $\lambda>0.$ There exist by the definition sequences $x_{k}\xrightarrow{\mathcal C\cap\Omega}\bar x$ and $x_k^*\to x^*$ such that $x_k^*\in N^p_{\mathcal C}(x_k,\Omega)$ for all $k\in\N,$ which  implies that $\lambda x_k^*\in N^p_{\mathcal C}(x_k,\Omega)$ for all $k\in \N$ due to the conical property of $N^p_{\mathcal C}(x_k,\Omega)$ (see (iii)). Since $\lambda x_k^*\to \lambda x^*,$ we obtain  $\lambda x^*\in N_{\mathcal C}(\bar x,\Omega)$. Hence, $N_{\mathcal C}(\bar x,\Omega)$ is a cone. Now, we prove that $N_{\mathcal C}(\bar x,\Omega)$ is closed. To see this, we take a sequence $(x_k^*)\subset N_{\mathcal C}(\bar x,\Omega)$ satisfying $x_k^*\to x^*.$ We  demonstrate that $x^*\in N_{\mathcal C}(\bar x,\Omega)$ also. For each $k\in \N,$ there exists, by the definition, a sequence $x_{k_m}\xrightarrow{\Omega\cap\mathcal C} \bar x, x_{k_m}^*\to x_k^*$ such that $x_{k_m}^*\in N^p_{\mathcal C}(x_{k_m},\Omega).$ For each $k\in \N$, we take $\tilde x_k:=x_{k_k}$ and $\tilde x_k^*:=x_{k_k}^*.$ Thus $\tilde x_{k}\xrightarrow{\Omega\cap\mathcal C} \bar x$, $\tilde x_k^*\to x^*$, and $\tilde x_k^*\in N^p_{\mathcal C}(\tilde x_{k},\Omega).$ This gives us the assertion that $x^*\in N_{\mathcal C}(\bar x,\Omega)$, so $N_{\mathcal C}(\bar x,\Omega)$ is closed.

{\rm (v)} Let $x^*\in N_{\mathcal C}(\bar x, \Omega).$ There exist, by the definition, sequences $x_k\xrightarrow{\mathcal C\cap \Omega} \bar x$ and $x_k^*\to x^*$ such that $x_k^*\in N^p_{\mathcal C}(x_k,\Omega)$ for all $k\in \mathbb N.$  This follows that, for each $k\in \N,$ there exists $t_k>0$ such that $\tilde x_k:=x_k+t_kx_k^*\in \Pi^{-1}(x_k,\Omega\cap\mathcal C)\cap \mathcal C$, which means that
$$
x_k^* =  \frac{\tilde x_k-x_k}{t_k}\in \cone[\tilde x_k-\Pi(\tilde x_k,\Omega\cap\mathcal C)].
$$
Thus $x^*\in \Limsup\limits_{x\xrightarrow{\mathcal C}\bar x}\left[\cone(x-\Pi(x,\Omega\cap\mathcal C))\right]$, which gives us that
$$
N_{\mathcal C}(\bar x,\Omega)\subset \Limsup\limits_{x\xrightarrow{\mathcal C}\bar x}\left[\cone(x-\Pi(x,\Omega\cap\mathcal C))\right].
$$
To prove the opposite inclusion, we take $x^*\in \Limsup\limits_{x\xrightarrow{\mathcal C}\bar x}\left[\cone(x-\Pi(x,\Omega\cap\mathcal C))\right].$ Then we find $x_k\xrightarrow{\mathcal C} \bar x, x_k^*\to x^*$ such that $x_k^*\in \cone[x_k-\Pi(x_k,\Omega\cap\mathcal C)].$ If $x_k^*=0$, then $x_k^*\in N^p_{\mathcal C}(x_k,\Omega)$. If $x_k^*\ne 0$, then by $x_k\xrightarrow{\mathcal C}\bar x$ and the local closedness around $\bar x$ of $\Omega$ for sufficiently small $\epsilon>0,$ we have that  $\B(\bar x,\epsilon)\cap \Omega$ is a closed set and $x_k\in \B(\bar x, \epsilon)\cap\mathcal C$ for all $k$ large enough.  Since $x_k^*\in \cone[x_k-\Pi(x_k,\Omega\cap\mathcal C)],$ there exist $\tilde x_k\in \Pi(x_k,\Omega\cap\mathcal C)$ and $\alpha_k> 0$ such that $x^*_k=\alpha_k(x_k-\tilde x_k).$ Hence, one has $\tilde x_k+\frac{1}{\alpha_k}x_k^*=x_k\in \Pi^{-1}(\tilde x_k,\Omega\cap\mathcal C)\cap\mathcal C.$ It follows from  the definition that  $x_k^*\in N^p_{\mathcal C}(\tilde x_k,\Omega).$ On the other hand, since $x_k\in \B(\bar x, \epsilon)$ and $x_k\in \Pi^{-1}(\tilde x_k,\Omega\cap\mathcal C)$, one has $\Vert x_k-\tilde x_k\Vert\le \Vert x_k-\bar x\Vert\le \epsilon,$ which implies, by the triangular inequality, that $\Vert \tilde x_k-\bar x\Vert\le 2\Vert x_k-\bar x\Vert\le 2\epsilon$, which converges to $0$ as $\epsilon\to 0.$ Thus $x^*\in N_{\mathcal C}(\bar x,\Omega),$ which  means that
$$
N_{\mathcal C}(\bar x,\Omega)\supset \Limsup\limits_{x\xrightarrow{\mathcal C}\bar x}\left[\cone(x-\Pi(x,\Omega\cap\mathcal C)\right].
$$
Hence, the proof is completed.
\end{proof}

Although Proposition~\ref{pro1}~(ii) demonstrates that $N^p_{\mathcal C}(x,\Omega)=N^{p}(x,\Omega\cap\mathcal C)\cap\{x^*\mid \exists p>0: x+px^*\in\mathcal C\}$, we do not have $N_{\mathcal C}(\bar x,\Omega)=N(x,\Omega\cap\mathcal C)\cap\{x^*\mid \exists p>0: x+px^*\in\mathcal C\}.$  We close this section with an example, which illustrates this assertion and  assertion \eqref{rem1-iii}, and we also demonstrate that $N_{\mathcal C}(\bar x,\Omega)$ maybe not convex in general.

\begin{example}\label{exam0}
Consider the set $\Omega$   given by $\Omega:=\left\{(x,y)\in \R^2\mid y=\dfrac{2}{1+e^{-x}}, x\ge 0\right\}.$ Let $\mathcal C:=\R_+\times \R$ and $(\bar x,\bar y)=(0,0).$

By the directly calculating, we have  $\Omega\cap\mathcal C=\Omega$,
$$
N_{\mathcal C}((0,0),\Omega)=\{(x,y)\mid 0\le x\le-y\}\cup [\R_+(-1,1)],
$$
and
$$
N((0,0),\Omega)=\hat N((0,0),\Omega)=\{(x,y)\mid x\le-y\}.
$$
Hence
\begin{align*}N((0,0),\Omega)\cap\{(x,y)\mid \exists p>0:p(x,y)\in \R_+\times\R\} &= \{(x,y)\mid 0\le x\le-y\}\\
&\ne N_{\mathcal C}((0,0),\Omega).
\end{align*}
In addition, it also demonstrated that
$$
N_{\mathcal C}((0,0),\Omega)\subsetneq \hat N((0,0),\Omega)\subset \Limsup\limits_{(x,y)\xrightarrow{\mathcal C}(0,0)}\hat N((x,y),\Omega).
$$
Moreover, we also see that $N_{\mathcal C}((0,0),\Omega)$ is not a convex set.

 \end{example}

\section{Limiting Coderivative with respect to a Set and its Applications}

As it was already mentioned, the projectional coderivative is an effective tool in providing necessary and sufficient conditions for the existence of the Aubin property with respect to a closed convex set of multifunctions and parametric systems. However, the projection onto a set's tangent cone makes the projectional coderivative appear reluctant. The limiting coderivative with respect to a set is a brand-new limiting coderivative type that we are now introducing. Because the notation of the limiting coderivative with respect to a set is defined in a manner comparable to that of the limiting coderivative, they share many similar properties.

\begin{definition}\label{coderivative-wrt}
{\rm Let $F:\mathbb R^n\rightrightarrows \R^m$ and $(\bar x,\bar y)\in \gph F.$ Let $\emptyset\ne \mathcal C$ be a closed subset of $\dom F$ with $\bar x\in \mathcal C.$ The {\it limiting coderivative with respect to} $\mathcal C$ of $F$ at $(\bar x,\bar y)$ is a multifunction $D_{\mathcal C}^*F(\bar x,\bar y):\mathbb R^m\rightrightarrows \mathbb R^n$ defined by
$$
D^*_{\mathcal C}F(\bar x,\bar y)(y^*) = \left\{ x^*\in\mathbb R^n\mid (x^*,-y^*)\in N_{\mathcal C\times \mathbb R^m}((\bar x,\bar y),\gph F_{\mathcal C})\right\}\; \forall y^*\in \mathbb R^m.
$$
In the case of $\mathcal C=\dom F,$ we use $\bar D^*F(\bar x,\bar y)$   instead of $D^*_{\mathcal C}F(\bar x,\bar y)$ and call the {\it relative limiting coderivative} of $F$ at $(\bar x,\bar y).$
}
\end{definition}

The following two properties are directly  from the definition.

\begin{proposition}\label{pro3}
Give a nonempty, closed, and  convex set $\mathcal C\subset \mathbb R^n$ and $\bar x\in \mathcal C$. Let $F:\mathbb R^n\rightrightarrows \R^m$ have a locally closed graph around $(\bar x,\bar y)\in \gph F.$ Then

{\rm (i)} $D^*_{\mathcal C}F(x,y)$ is positively homogeneous, i.e., $\gph D^*_{\mathcal C}$ is a cone.

{\rm (ii)} $\bar D^*F(x,y)(y^*)\subset D^*F(x,y)(y^*)$ for any $y^*\in \mathbb R^m.$
\end{proposition}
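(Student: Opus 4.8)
The plan is to read both assertions directly off the definition of $D^*_{\mathcal C}F$ together with the structural facts about the limiting normal cone with respect to a set already established in Proposition~\ref{pro1}; neither part requires a new estimate. For (i) I would argue at the level of graphs. By definition, $(y^*,x^*)\in\gph D^*_{\mathcal C}F(x,y)$ if and only if $(x^*,-y^*)\in N_{\mathcal C\times\R^m}((x,y),\gph F_{\mathcal C})$. The set $\mathcal C\times\R^m$ is closed and convex since $\mathcal C$ is, and $\gph F_{\mathcal C}=\gph F\cap(\mathcal C\times\R^m)$ is locally closed around $(x,y)$ because $\gph F$ is; hence Proposition~\ref{pro1}~(iv) applies and shows that $N_{\mathcal C\times\R^m}((x,y),\gph F_{\mathcal C})$ is a cone. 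Therefore, for every $\lambda\ge 0$, membership of $(x^*,-y^*)$ forces $\lambda(x^*,-y^*)=(\lambda x^*,-\lambda y^*)$ into the same cone, which reads back as $\lambda x^*\in D^*_{\mathcal C}F(x,y)(\lambda y^*)$, i.e. $(\lambda y^*,\lambda x^*)\in\gph D^*_{\mathcal C}F(x,y)$. Thus the graph is invariant under nonnegative scaling, which is precisely positive homogeneity.

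For (ii), recall that $\bar D^*F=D^*_{\dom F}F$, so the ambient set is $\mathcal C'=\dom F\times\R^m$ and the set under study is $\Omega=\gph F$. The decisive observation is that $\gph F\subseteq\dom F\times\R^m$ automatically, since $(x,y)\in\gph F$ forces $x\in\dom F$; hence $\gph F_{\dom F}=\gph F$ and, crucially, $\Omega\cap\mathcal C'=\Omega$. I would then compare proximal normal cones pointwise: straight from the definition, $z^*\in N^p_{\mathcal C'}(z,\Omega)$ says that $z$ is a closest point of $z+tz^*$ in $\Omega\cap\mathcal C'=\Omega$ for some $t>0$, so $z^*\in N^p(z,\Omega)$, giving $N^p_{\mathcal C'}(z,\Omega)\subseteq N^p(z,\Omega)$ at every $z$ (this inclusion uses only the definition, not convexity). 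Passing to the sequential outer limit along $z\xrightarrow{\Omega}(\bar x,\bar y)$, using monotonicity of $\Limsup$ under pointwise inclusion together with the identity $N((\bar x,\bar y),\Omega)=\Limsup_{z\to(\bar x,\bar y)}N^p(z,\Omega)$ from Remark~\ref{rem1}~(ii), then yields $N_{\mathcal C'}((\bar x,\bar y),\gph F)\subseteq N((\bar x,\bar y),\gph F)$. Unwinding the coderivative definitions gives $\bar D^*F(x,y)(y^*)\subseteq D^*F(x,y)(y^*)$.

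The routine part consists of the two translations between coderivative membership and normal-cone membership; the only point needing care is in (ii), where one must notice that $\gph F$ already lies inside $\dom F\times\R^m$ so that the constraint set disappears from the projection set $\Omega\cap\mathcal C'$, and must check that the index set of the outer limit (namely $z\xrightarrow{\Omega}(\bar x,\bar y)$) is the same for $N^p_{\mathcal C'}$ and for $N^p$. Once these are in place, monotonicity of the Painlev\'{e}--Kuratowski outer limit finishes the argument, so I anticipate no serious obstacle.
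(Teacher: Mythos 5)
Your proposal is correct and follows exactly the route the paper intends: the paper dismisses both claims as "directly from the definition," and your verification — Proposition~\ref{pro1}~(iv) giving the cone property of $N_{\mathcal C\times\R^m}((\bar x,\bar y),\gph F_{\mathcal C})$ for (i), and the definitional inclusion $N^p_{\dom F\times\R^m}(z,\gph F)\subset N^p(z,\gph F)$ (using $\gph F\subset\dom F\times\R^m$) passed through the outer limit via Remark~\ref{rem1}~(ii) for (ii) — is precisely the intended unwinding. No gaps; your attention to the coincidence of the index sets of the two outer limits is the right point of care.
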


Now, it is the right time for us to present an example to differentiate the concepts in Definition \ref{coderivative-wrt} from some that already exist as well as to illustrate Proposition~\ref{pro3}~(ii).

\begin{example}[\cite{YMLY23}, Example~2.1]\label{exam1}
Consider the set-valued mapping $F:\mathbb R\rightrightarrows\mathbb R$ given by
$$
F(x):=\begin{cases}
(-\infty, \dfrac{2}{1+e^{-x}}-1] & \text{ \rm if } x\ge 0,\\
\emptyset& \text{ \rm otherwise},\end{cases}
$$
and $\mathcal C:=\dom F=\mathbb R_+.$ Take $(\bar x,\bar y):=(0,0).$ It is known in \cite[Example~2.1]{YMLY23} that
$$
\gph D^*F(\bar x,\bar y) =\left\{(y^*,x^*)\in \mathbb R^2\mid 0\le y^*\le -x^*\right\}
$$
and
$$
\gph D^*\vert_{\mathcal C}F(\bar x,\bar y) =(\R_+\times\{0\})\cup \R_+(1,-1).
$$
We now compute $\bar D^*F(\bar x,\bar y)$ according to Proposition~\ref{pro1}~(iv). Observe
$$
N_{\mathcal C\times \R}((\bar x,\bar y),\gph F_{\mathcal C})=\R_+(-1,1).
$$
Thus \begin{align*}\gph \bar D^*_F(\bar x,\bar y)&=\left\{(y^*,x^*)\mid (x^*,-y^*)\in N_{\mathcal C\times \R}((\bar x,\bar y),\gph F_{\mathcal C})\right\}\\
&= \R_+(-1,-1)\subset \gph D^*F(\bar x,\bar y).
\end{align*}
\end{example}

By using the limiting coderivative with respect to a set, we now provide the necessary and sufficient conditions for the Aubin property with respect to a set of multifunctions as follows.

\begin{theorem}[A version with respect to a set of the Mordukhovich criterion] \label{thm1}
Let $\mathcal C$ be a closed and convex subset of $\mathbb R^n$. Let $F:\mathbb R^n\rightrightarrows\mathbb R^m$, $\bar x\in \mathcal C$, and $\bar y\in F(\bar x).$ Assume that $F$ has a locally closed graph around $(\bar x,\bar y)$. Then $F$ has the Aubin property with respect to $\mathcal C$ around $(\bar x,\bar y)$ if and only if the following inequality holds for any $\kappa\ge \mathfrak{a}_{\mathcal C}F(\bar x,\bar y).$
\begin{equation}\label{thm1-eqa}
\Vert x^*\Vert\le \kappa \Vert y^*\Vert\; \forall x^*\in D_{\mathcal C}^*F(\bar x,\bar y)(y^*).
\end{equation}
Consequently, $F$ has the Aubin property with respect to $\mathcal C$ around $(\bar x,\bar y)$ if and only if $D^*_{\mathcal C}F(\bar x,\bar y)(0)=\{0\}.$
Moreover, the exact Lipschitzian constant can be computed by
\begin{equation}\label{lipschit-con}
\mathfrak{a}_{\mathcal C}F(\bar x,\bar y)=\Vert D^*_{\mathcal C}F(\bar x,\bar y)\Vert:=\sup\left\{\Vert x^*\Vert\mid y^*\in \B, x^*\in D^*F_{\mathcal C}(\bar x,\bar y)(y^*)\right\}.
\end{equation}
\end{theorem}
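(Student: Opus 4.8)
The plan is to prove the theorem in three stages: the necessity direction with the one-sided estimate $\|D^*_{\mathcal C}F(\bar x,\bar y)\|\le\mathfrak{a}_{\mathcal C}F(\bar x,\bar y)$, the sufficiency direction, and the matching estimate yielding the exact constant. The workhorse throughout is the proximal description from Proposition~\ref{pro1}: every element of $N_{\mathcal C\times\R^m}((\bar x,\bar y),\gph F_{\mathcal C})$ is a limit of proximal normals $(x_k^*,-y_k^*)\in N^p_{\mathcal C\times\R^m}((x_k,y_k),\gph F_{\mathcal C})$ based at points $(x_k,y_k)\in\gph F_{\mathcal C}$ with $(x_k,y_k)\to(\bar x,\bar y)$, and each carries both the quadratic estimate and the feasibility $x_k+p_kx_k^*\in\mathcal C$ of Proposition~\ref{pro1}~(i). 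I first record that, since $\gph D^*_{\mathcal C}F$ is a closed cone (homogeneity from Proposition~\ref{pro3}~(i), closedness from Proposition~\ref{pro1}~(iv)), the condition $D^*_{\mathcal C}F(\bar x,\bar y)(0)=\{0\}$ is equivalent to finiteness of $\kappa_0:=\|D^*_{\mathcal C}F(\bar x,\bar y)\|$: if $\kappa_0=\infty$, normalizing $\|x_k^*\|=1$ with $\|y_k^*\|\to0$ and passing to the limit produces a nonzero element of $D^*_{\mathcal C}F(\bar x,\bar y)(0)$.

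For necessity, fix $x^*\in D^*_{\mathcal C}F(\bar x,\bar y)(y^*)$ and take base points and proximal normals as above. The key is that convexity of $\mathcal C$ promotes $x_k^*$ to a feasible direction, $x_k+tx_k^*\in\mathcal C$ for all small $t>0$. Testing the quadratic estimate with $x:=x_k+tx_k^*\in\mathcal C\cap U$, and feeding in the Aubin inclusion for $u:=x_k$ (valid since $y_k\in F(x_k)\cap V$ for large $k$) to obtain $y\in F(x)$ with $\|y-y_k\|\le\kappa t\|x_k^*\|$, I divide by $t\|x_k^*\|$ and let $t\to0^+$ to get $\|x_k^*\|\le\kappa\|y_k^*\|$; passing to the limit yields \eqref{thm1-eqa}. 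With $y^*\in\B$ and the infimum over Aubin constants this gives $\kappa_0\le\mathfrak{a}_{\mathcal C}F(\bar x,\bar y)$.

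For sufficiency, fix $\kappa>\kappa_0$. A compactness upgrade first produces a neighbourhood $W$ of $(\bar x,\bar y)$ on which every proximal normal $(x^*,-y^*)\in N^p_{\mathcal C\times\R^m}((x,y),\gph F_{\mathcal C})$ with $(x,y)\in\gph F_{\mathcal C}\cap W$ obeys $\|x^*\|\le\kappa\|y^*\|$: a violating sequence would, by closedness of $N_{\mathcal C\times\R^m}$, yield $x^*\in D^*_{\mathcal C}F(\bar x,\bar y)(y^*)$ with $\|x^*\|=1$ and $\|y^*\|\le1/\kappa<1/\kappa_0$, contradicting $\|x^*\|\le\kappa_0\|y^*\|$. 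The second, decisive observation is that projecting any point whose first coordinate lies in $\mathcal C$ onto $\gph F_{\mathcal C}$ produces a proximal normal that is automatically $\mathcal C$-relative, because the two first coordinates lie in the convex set $\mathcal C$ (Proposition~\ref{pro1}~(i)). Hence, given $x,u\in\mathcal C\cap U$ and $y\in F(u)\cap V$, I run a successive-projection iteration: set $(x_0,y_0)=(u,y)$ and let $(x_{i+1},y_{i+1})$ be a projection of $(x,y_i)$ onto $\gph F_{\mathcal C}$. The normal $(x-x_{i+1},\,y_i-y_{i+1})$ is $\mathcal C$-relative and based in $W$, so $\|x-x_{i+1}\|\le\kappa\|y_i-y_{i+1}\|$, while minimality against $(x_i,y_i)\in\gph F_{\mathcal C}$ gives $\|x-x_{i+1}\|^2+\|y_i-y_{i+1}\|^2\le\|x-x_i\|^2$; together these force the contraction $\|x-x_{i+1}\|\le\frac{\kappa}{\sqrt{\kappa^2+1}}\|x-x_i\|$. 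Thus $x_i\to x$, $y_i$ is Cauchy with limit $y_\infty\in F(x)$, and $\|y-y_\infty\|$ is bounded by a multiple of $\|u-x\|$; this is exactly $d(y,F(x))\le\kappa'\|u-x\|$, i.e.\ the Aubin inclusion with respect to $\mathcal C$.

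The iteration already yields the qualitative equivalence: $F$ has the Aubin property with respect to $\mathcal C$ iff $\kappa_0<\infty$ iff $D^*_{\mathcal C}F(\bar x,\bar y)(0)=\{0\}$. To obtain the exact value in \eqref{lipschit-con} I still need the sharp bound $\mathfrak{a}_{\mathcal C}F(\bar x,\bar y)\le\kappa_0$, i.e.\ the Aubin property with constant arbitrarily close to $\kappa_0$; the crude contraction above only gives a larger admissible constant. I expect this sharpening to be the main obstacle: it requires replacing the contraction by a mean-value/decrease estimate for the marginal function $x\mapsto d(y,F(x))+\delta_{\mathcal C}(x)$ along segments of $\mathcal C$, in which the slope at a point of positive value is realized by a $\mathcal C$-relative normal at a nearest point $w\in\Pi(y,F(x))$ and is therefore at most $\kappa$. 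The convexity of $\mathcal C$ is essential here, guaranteeing that these segments remain in $\mathcal C$ so that only the $\mathcal C$-relative normals controlled on $W$ ever intervene; the care needed is in the subdifferential calculus for the marginal function and in keeping the nearest points inside $W$. Combining this with the necessity estimate gives $\mathfrak{a}_{\mathcal C}F(\bar x,\bar y)=\kappa_0=\|D^*_{\mathcal C}F(\bar x,\bar y)\|$.
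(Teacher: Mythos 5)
Your necessity argument is essentially the paper's own: both exploit Proposition~\ref{pro1}~(i) to get the quadratic estimate plus the feasibility $x_k+\alpha_k x_k^*\in\mathcal C$, use convexity of $\mathcal C$ to move from $x_k$ toward $x_k+\alpha_kx_k^*$ inside $\mathcal C$, and feed the Aubin inclusion into the quadratic estimate before passing to the limit; this part is sound and yields $\Vert D^*_{\mathcal C}F(\bar x,\bar y)\Vert\le\mathfrak{a}_{\mathcal C}F(\bar x,\bar y)$. Your sufficiency argument, by contrast, is genuinely different from the paper's and, as a \emph{qualitative} argument, correct: the observation that projecting a point $(x,y_i)$ with $x\in\mathcal C$ onto $\gph F_{\mathcal C}$ automatically produces a $\mathcal C$-relative proximal normal (take $t=1$ in Definition~\ref{normal-cone-wrt}) is exactly right, and the resulting contraction $\Vert x-x_{i+1}\Vert\le\frac{\kappa}{\sqrt{\kappa^2+1}}\Vert x-x_i\Vert$ does deliver $d(y,F(x))\le\frac{1}{1-\theta}\Vert u-x\Vert$ with $\theta=\kappa/\sqrt{\kappa^2+1}$, hence the Aubin property and the equivalence with $D^*_{\mathcal C}F(\bar x,\bar y)(0)=\{0\}$. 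The paper instead argues by contradiction with an exact-penalty scheme: it minimizes $\varphi_k(x,y)=\tilde\kappa\Vert x-x_k'\Vert+\Vert y-y_k''\Vert$ over a compact piece of $\gph F_{\mathcal C}$ and extracts a violating normal at the minimizer.

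The genuine gap is the exact-constant claim. Your contraction only gives the Aubin constant $\frac{1}{1-\theta}=\kappa^2+1+\kappa\sqrt{\kappa^2+1}$, which blows up like $2\kappa^2$; so you have proved $\Vert D^*_{\mathcal C}F(\bar x,\bar y)\Vert\le\mathfrak{a}_{\mathcal C}F(\bar x,\bar y)$ and $\mathfrak{a}_{\mathcal C}F(\bar x,\bar y)<\infty$, but not the inequality $\mathfrak{a}_{\mathcal C}F(\bar x,\bar y)\le\kappa$ whenever \eqref{thm1-eqa} holds with $\kappa$, which is precisely what \eqref{lipschit-con} (and the sharp form of the first equivalence) requires. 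You acknowledge this and sketch a marginal-function/mean-value repair, but you do not carry it out, so the theorem's quantitative assertion remains unproven in your proposal. The paper closes this gap as follows: assuming the Aubin inclusion fails for some $\tilde\kappa>\kappa$, the Weierstrass minimizer $(\hat x_k,\hat y_k)$ of $\varphi_k$ converges to $(\bar x,\bar y)$ with $\hat x_k\ne x_k'$, and a two-case analysis applies. If $\hat y_k=y_k''$, one shows $(x_k'-\hat x_k,0)$ is a $\mathcal C$-relative proximal normal, whose normalized limit gives $(x^*,0)$ with $\Vert x^*\Vert=1$, contradicting \eqref{thm1-eqa}; if $\hat y_k\ne y_k''$, the penalty is smooth there, its negative gradient is a $\mathcal C$-relative proximal normal (convexity of $\mathcal C$ verifies the feasibility condition, just as in your step), and the limit $(x^*,y^*)$ satisfies $\Vert x^*\Vert=\tilde\kappa>\kappa=\kappa\Vert y^*\Vert$, again a contradiction. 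This yields $\mathfrak{a}_{\mathcal C}F(\bar x,\bar y)\le\kappa$ with the same $\kappa$ appearing in \eqref{thm1-eqa}, which is the step your iteration cannot reproduce; until you execute your proposed sharpening (keeping the nearest points inside the controlled neighbourhood and justifying the slope estimate for $x\mapsto d(y,F(x))+\delta_{\mathcal C}(x)$), formula \eqref{lipschit-con} is not established.
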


\begin{proof}
We first prove that  condition \eqref{thm1-eqa} is the necessary and sufficient condition for the existence of the Aubin property around $(\bar x,\bar y)$ with respect to $\mathcal C$ of $F.$ To demonstrate the necessary condition, we assume that $F$ has the Aubin property with respect to $\mathcal C$ around $(\bar x,\bar y)$ and then we   prove that, for any $\kappa\ge \mathfrak{a}_{\mathcal C}F(\bar x,\bar y),$
\begin{equation}\label{thm-eqa1}
    \Vert x^*\Vert \le \kappa. \Vert y^*\Vert \; \forall (y^*,x^*)\in \gph D^*_{\mathcal C}F(\bar x,\bar y).
\end{equation}
Pick $x^*\in D^*_{\mathcal C}F(\bar x,\bar y)(y^*)$ with some $y^*\in \mathbb R^m$. Using the definition, one sees that there exist sequences $(x_k,y_k)\xrightarrow{\tgph F_{\mathcal C}} (\bar x,\bar y)$ and $(x_k^*,y_k^*)\to (x^*,y^*)$ such that
$$
(x_k^*,-y_k^*)\in N^p_{\mathcal C\times \mathbb R^m}((x_k,y_k),\gph F).
$$
It follows from Proposition~\ref{pro1} that there exists $\alpha_k>0$ for every $k\in \N$ satisfying
\begin{equation}\label{thm1-eq3a}
(x_k,y_k)+\alpha_k(x_k^*,-y_k^*)\in \mathcal C\times \R^m
\end{equation}
and the following inequality holds for all $(x,y)\in \gph F_{\mathcal C}$
\begin{equation}\label{thm1-eq3}
\langle x^*, x-x_k \rangle -\langle y_k^*, y-y_k\rangle\le \dfrac{1}{2\alpha_k}\Vert (x,y)-(x_k,y_k)\Vert^2.
\end{equation}

Besides, from the Aubin property with respect to $\mathcal C$ around $(\bar x,\bar y)$ with the constant $\kappa$ of $F$, we find neighborhoods $U$ of $\bar x$ and $V$ of $\bar y$ such that
\begin{equation}\label{thm1-eq4}
F(x)\cap V\subset F(z)+\kappa\Vert z-x\Vert \mathbb B\; \forall x,z\in U\cap \mathcal C.
\end{equation}
Take $\eta\in (0,\frac{1}{2})$ satisfying $\bar x+3\eta\mathbb B\subset U, \bar y+2\eta\mathbb B\subset V$, and pick $k_0\in \mathbb N$ such that $z_k, x_k\in \bar x+\eta\mathbb B$ and $y_k\in \bar y+\eta\mathbb B$ for all $k\ge k_0.$
For any $\epsilon>0,$ take $\lambda_k:=\min\{\eta,\frac{2\epsilon\alpha_k(1+\kappa)}{1+\kappa^2}\}\in (0,1).$ Then, for any $x\in \mathbb B(x_k,\lambda_k),$ one has  $x\in x_k+\eta\mathbb B\subset \bar x+2\eta\mathbb B \subset U$. Pick $\tilde x_k:=\lambda_kz_k+(1-\lambda_k)x_k$. Then $\tilde x_k\in \lambda_k(x_k+2\eta\B)+(1-\lambda_k)x_k\subset x_k+\lambda_k\B$ because of $\eta<\dfrac{1}{2}.$ Moreover, by the convexity of $\mathcal C,$ we obtain  $\tilde x_k\in \mathcal C.$ So $\tilde x_k\in (x_k+\lambda_k\mathbb B)\cap \mathcal C.$ Taking $x=x_k$ and $z=\tilde x_k$ in \eqref{thm1-eq4}, we obtain
\begin{align*}
    y_k&\in F(\tilde x_k)+\kappa\Vert x_k-\tilde x_k\Vert \mathbb B,
\end{align*}
which  implies the existence of $\tilde y_k\in F(\tilde x_k)$ and $b_k\in \mathbb B$ satisfying
$$
y_k =\tilde y_k + \kappa\Vert x_k-\tilde x_k\Vert b_k.
$$
Taking \eqref{thm1-eq3} into account with $x=\tilde x_k, y=\tilde y_k$, we obtain
\begin{align*}
\langle x_k^*,\tilde x_k-x_k\rangle -\langle y_k^*, \kappa\Vert\tilde x_k-x_k\Vert b_k\rangle&\le \dfrac{1}{2\alpha_k}\left(\Vert \tilde x_k-x_k\Vert^2 + \kappa^2\Vert\tilde x_k-x_k\Vert^2 \Vert b_k\Vert^2\right)\\
&\le \dfrac{1}{2\alpha_k}(1+\kappa^2)\Vert \tilde x_k-x_k\Vert^2\\
&\le \epsilon(1+\kappa)\Vert \tilde x_k-x_k\Vert.
\end{align*}
Note that $\tilde x_k-x_k=\lambda_k(z_k-x_k)=\lambda_k\alpha_kx_k^*.$ It follows that
$$
\lambda_k\langle x_k^*, \alpha_kx_k^*\rangle - \lambda_k\langle y_k^*, \kappa\alpha_k\Vert x_k^*\Vert b_k\rangle\le \lambda_k\epsilon(1+\kappa)\alpha_k\Vert x_k^*\Vert,
$$
which follows that
$$
\Vert x_k^*\Vert\le \kappa\Vert y_k^*\Vert+\epsilon(1+\kappa).
$$
Passing to the limit as $\epsilon\to 0$ and $k\to \infty,$ we further conclude
$\Vert x^*\Vert\le \kappa\Vert y^*\Vert.$

We now prove  the sufficient conditions, i.e., if   relation \eqref{thm1-eqa} holds for some $\kappa\ge 0$, then $F$ has Aubin property with respect to $\mathcal C$ around $(\bar x,\bar y)$ with $0\le \mathfrak{a}_{\mathcal C}F(\bar x,\bar y)\le\kappa.$ Indeed, if this assertion is not true, then  relation \eqref{Aubin-pro} is not satisfied for some $\tilde\kappa> \kappa.$ Then there exist $x_{k}',x_k''\xrightarrow{\tgph F_{\mathcal C}}(\bar x,\bar y)$ with $x_k'\ne x_k''$ and $y_k''\xrightarrow{F(x_k'')}\bar y$ such that
$$
d\left(y_k'',F(x_k')\right)>\tilde\kappa \Vert x_k' - x_k''\Vert,
$$
which implies that $y_k''\notin F(x_k')$. Since $F$ has a locally closed graph around $(\bar x,\bar y),$ one sees that there exists a $r>0$ such that $\gph F\cap(\B(\bar x,4r)\times\B(\bar x,2r))$ is a closed set, which  implies that $\gph F\cap(\B(\bar x,4r)\times\B(\bar x,2r))$ is compact. Without loss of generality, we may assume that $x_k',x_k''\in \B(\bar x,r)$ and $y_k''\in \B(\bar y,r)$.  Then there exists $y_k''\ne y_k'\in \Pi(y_k'';F(x_k'))$ such that
\begin{equation}\label{thm1-eq6}
\Vert y-y_k''\Vert\ge \Vert y_k'-y_k''\Vert>\tilde{\kappa} \Vert x_k''-x_k'\Vert\; \forall y\in F(x_k').
\end{equation}
 Moreover, by putting $\Omega:=\gph F_{\mathcal C}\cap\B(\bar x,4r)\times\B(\bar y,2r),$ one sees that  $\Omega$ is compact due to the closedness of $\mathcal C\times\R^m$ and the compactness of $\gph F\cap(\B(\bar x,4r)\times\B(\bar x,2r))$. We now define the extended real-valued function as follows:
$$
\varphi_k(x,y):=\tilde \kappa\Vert x - x_k'\Vert+\Vert y-y_k''\Vert +\delta_{\Omega}(x,y)\; \forall (x,y)\in \mathbb R^n\times\mathbb R^m.
$$
Then $\varphi_k$ is lower semi-continuous, level-bounded, and proper. By using \cite[Theorem~1.9]{Roc98}, there exists $(\hat x_k,\hat y_k)\in \Omega$, which is a the global minimizer to $\varphi_k.$ We next prove that $(\hat x_k,\hat y_k)\xrightarrow{\mathcal C\times \R^m} (\bar x,\bar y).$ Since $(\hat x_k,\hat y_k)\in \Omega\subset \mathcal C\times\R^m,$ it is enough to prove  that $\hat x_k\to \bar x$ and $\hat y_k\to \bar y.$ Indeed, we first see that
$$
\tilde\kappa\Vert x_k''-x_k'\Vert=\varphi_k(x_k'',y_k'')\ge \varphi_k(\hat x_k,\hat y_k)\ge \tilde\kappa\Vert \hat x_k-x_k'\Vert.
$$
By the facts that $x_k',x_k''\to\bar x$ and
\begin{align}
\Vert \hat x_k-\bar x\Vert&\le \Vert \hat x_k-x_k'\Vert +\Vert x_k'-\bar x\Vert\nonumber\\
&\le \Vert x_k''-x_k'\Vert+\Vert x_k'-\bar x\Vert\nonumber\\
&\le 2\Vert x_k'-\bar x\Vert+\Vert x_k''-\bar x\Vert,\label{thm1-eq6a}
\end{align} we obtain $\hat x_k\to \bar x.$ On the other hand, since $(\hat x_k,\hat y_k)$ is the global minimizer to $\varphi_k,$ one has $$\varphi(\hat x_k,\hat y_k)=\kappa\Vert \hat x_k-x_k'\Vert+\Vert y_k''-\hat y_k\Vert\le \varphi(x_k'',y_k'')=\kappa\Vert x_k'-x_k''\Vert,$$ which follows that $\Vert y_k''-\hat y_k\Vert\to 0$ due to $\Vert \hat x_k-x_k'\Vert\to 0$ and $\Vert x_k'-x_k''\Vert\to 0.$ According to $y_k''\to\bar y,$ we derive that $\hat y_k\to \bar y.$

We next demonstrate that $\hat x_k\ne x_k'.$ Indeed, in the opposite that $\hat x_k=x_k'$, one has $\hat y_k\in F(x_k')$. Thus
$$
\varphi_k(\hat x_k,\hat y_k)=\Vert \hat y_k-y_k''\Vert\ge \Vert y_k'-y_k''\Vert>\tilde\kappa\Vert x_k'-x_k''\Vert=\varphi_k(x_k'',y_k''),
$$
which  contradicts  the fact that $(\hat x_k,\hat y_k)$ is a global minimizer of $\varphi_k.$

We now consider the following two cases.

Case 1. $\hat y_k=y_k''.$ In this case, $\hat x_k\in F^{-1}(y_k'')\cap \mathcal C.$ Since $(\hat x_k,y_k'')$ is global minimizer of $\varphi_k$ for any $(x,y)\in \Omega=\gph F_{\mathcal C}\cap\B(\bar x,4r)\times\B(\bar y,2r),$ we have
\begin{equation}\label{thm1-eq7}
\tilde \kappa\Vert x-\hat x_k\Vert+\Vert y-y_k''\Vert\ge \tilde \kappa\Vert \hat x_k- x_k'\Vert.
\end{equation}
Using the Cauchy-Schwarz inequality, we obtain
\begin{equation}\label{thm1-eq8}
\sqrt{\tilde\kappa^2+1}.\Vert (x-\hat x_k,y-y_k'')\Vert \ge \tilde \kappa\Vert x-\hat x_k\Vert+\Vert y-y_k''\Vert\ge \tilde \kappa\Vert \hat x_k- x_k'\Vert.
\end{equation}
On the other hand, from \eqref{thm1-eq6} and \eqref{thm1-eq7} with $(x,y)=(x_k'',y_k'')$, we conclude that
$$
\Vert y_k'-y_k''\Vert\ge \tilde\kappa\Vert x_k''-x_k'\Vert\ge \tilde \kappa\Vert\hat x_k-x_k'\Vert.
$$
We next prove  that $(x_k'-\hat x_k,0)\in N^p_{\mathcal C\times \mathbb R^m}((\hat x_k,y_k''),\gph F).$ For any $(x,y)\in \gph F_{\mathcal C}\cap\B((\hat x_k,y_k''),r)$, we see from  \eqref{thm1-eq6a} that
$$
\Vert x-\bar x\Vert\le \Vert x-\hat x_k\Vert+\Vert\hat x_k-\bar x\Vert\le 4r
$$
and
$$
\Vert y-\bar y\Vert\le \Vert y- y_k''\Vert+\Vert y_k''-\bar y\Vert\le 2r.
$$
Therefore, $(x,y)\in \gph F_{\mathcal C}\cap(\B(\bar x,4r)\times\B(\bar y,2r))=\Omega$ and then
\begin{align*}
\langle (x_k'-\hat x_k,0), (x,y)- (\hat x_k,y_k'')\rangle &= \langle x_k'-\hat x_k, x-\hat x_k\rangle\\
& \le \Vert x_k'-\hat x_k\Vert.\Vert x-\hat x_k\Vert\\
&\le \dfrac{\sqrt{\tilde\kappa^2+1}}{\tilde\kappa}\Vert (x,y)-(\hat x_k,y_k'')\Vert.\Vert x-\hat x_k\Vert\\
&\le \dfrac{1}{2\frac{\tilde\kappa}{2\sqrt{\tilde\kappa^2+1}}}\Vert (x,y)-(\hat x_k,y_k'')\Vert^2,
\end{align*}
where the second inequality is from \eqref{thm1-eq8} and the final inequality holds by the fact that $\Vert (x,y)-(\hat x_k,y_k'')\Vert\ge \Vert x-\hat x_k\Vert\ge 0.$ Thus
$$
(x_k'-\hat x_k,0)\in N^p((\hat x_k,y_k''),\gph F_{\mathcal C}).
$$
Next, we need to affirm that
$$
(\hat x_k,y_k'')+\dfrac{\tilde\kappa}{2\sqrt{\tilde\kappa^2+1}}(x'_k-\hat x_k,0)\in \mathcal C\times \R^m.
$$
It is sufficient to prove  $\hat x_k+\dfrac{\tilde\kappa}{2\sqrt{\tilde\kappa^2+1}}(x'_k-\hat x_k)\in \mathcal C.$ Indeed, we have
$$
\hat x_k+\dfrac{\tilde\kappa}{2\sqrt{\tilde\kappa^2+1}}(x'_k-\hat x_k)=\dfrac{\tilde\kappa}{2\sqrt{\tilde\kappa^2+1}}x_k'+(1-\dfrac{\tilde\kappa}{2\sqrt{\tilde\kappa^2+1}})\hat x_k\in \mathcal C,
$$
which is due to  the convexity of $\mathcal C$ and the fact that $x_k', \hat x_k\in \mathcal C.$
Using Proposition~\ref{pro1}~(ii)~\eqref{prox-nor-cone-eq2}, we obtain
$$
(x_k'-\hat x_k,0)\in N^p_{\mathcal C\times \mathbb R^m}((\hat x_k,y_k''),\gph F).
$$
By the conic property of $N^p_{\mathcal C\times \mathbb R^m}((\hat x_k,y_k''),\gph F)$ and the fact that $\hat x_k\ne x_k',$ we have
$$
\left(\dfrac{x_k'-\hat x_k}{\Vert x_k'-\hat x_k\Vert},0\right)\in N^p_{\mathcal C\times \mathbb R^m}((\hat x_k,y_k''),\gph F).
$$
Without loss of generality, we assume that $\left(\dfrac{x_k'-\hat x_k}{\Vert x_k'-\hat x_k\Vert},0\right)\to (x^*,0).$ Thus $(x^*,0)\in N_{\mathcal C}((\bar x,\bar y),\gph F)$ and $\Vert x^*\Vert=1>\tilde\kappa\Vert 0\Vert$, which contradicts  condition \eqref{thm1-eqa}.

Case 2. $\hat y_k\ne y_k''.$ In this case,   function $\phi(x,y):=\tilde\kappa\Vert x-x_k'\Vert+\Vert y-y_k''\Vert$   belongs to $\mathcal C^2$ around $(\hat x_k,\hat y_k).$ On the other hand, it is easy to see that $(\hat x_k,\hat y_k)$ is a local solution to the following problem:
$$
\min\; \phi(x,y) \text{ \rm such that } (x,y)\in \gph F_{\mathcal C}.
$$
By using \cite[Exercise 2.12]{CLSW98}, we have
$$
-\nabla\phi(\hat x_k,\hat y_k)=\left(-\tilde\kappa\dfrac{\hat x_k-x_k'}{\Vert \hat x_k-x_k'\Vert}, -\dfrac{\hat y_k-y_k''}{\Vert \hat y_k-y_k''\Vert}\right)\in N^p((\hat x_k,\hat y_k),\gph F_{\mathcal C}).
$$
By Remark~\ref{rem1}~(i), there exists $t_k>0$ such that
$$
(\hat x_k,\hat y_k)\in \Pi\left((\hat x_k,\hat y_k)+t_k\Big(\kappa\dfrac{x_k'-\hat x_k}{\Vert x_k'-\hat x_k\Vert}, \dfrac{y_k''-\hat y_k}{\Vert y_k'' -\hat y_k\Vert}\Big),\gph F_{\mathcal C}\right)
$$
and, for any $t\in (0,t_k)$,
$$
\{(\hat x_k,\hat y_k)\}= \Pi\left((\hat x_k,\hat y_k)+t\Big(\kappa\dfrac{x_k'-\hat x_k}{\Vert x_k'-\hat x_k\Vert}, \dfrac{y_k''-\hat y_k}{\Vert y_k'' -\hat y_k\Vert}\Big),\gph F_{\mathcal C}\right).
$$
Taking $0<\bar t_k<\min\left\{t_k,\dfrac{\Vert x_k'-\hat x_k\Vert}{\tilde\kappa}\right\},$  we see that
\begin{equation}\label{thm1-eq9}
(\hat x_k,\hat y_k)\in \Pi\left((\hat x_k,\hat y_k)+\bar t_k\Big(\kappa\dfrac{x_k'-\hat x_k}{\Vert x_k'-\hat x_k\Vert}, \dfrac{y_k''-\hat y_k}{\Vert y_k'' -\hat y_k\Vert}\Big),\gph F_{\mathcal C}\right).
\end{equation}
Moreover, because of the convexity of $\mathcal C$,  $\hat x_k, x_k'\in \mathcal C$, and  $0<\lambda_k:=\dfrac{\bar t\tilde\kappa}{\Vert x_k'-\hat x_k\Vert}<1$, we obtain
$$
\hat x_k+\bar t_k\tilde\kappa\dfrac{x_k'-\hat x_k}{\Vert x_k'-\hat x_k\Vert} = \lambda_k x_k'+(1-\lambda_k)\hat x_k\in \mathcal C,
$$
which implies that
\begin{equation}\label{thm1-eq10}
(\hat x_k,\hat y_k)+\bar t_k\Big(\kappa\dfrac{x_k'-\hat x_k}{\Vert x_k'-\hat x_k\Vert}, \dfrac{y_k''-\hat y_k}{\Vert y_k'' -\hat y_k\Vert}\Big)\in \mathcal C\times \R^m.
\end{equation}
Combining \eqref{thm1-eq9} and \eqref{thm1-eq10}, we obtain
$$
\Big(\kappa\dfrac{x_k'-\hat x_k}{\Vert x_k'-\hat x_k\Vert}, \dfrac{y_k''-\hat y_k}{\Vert y_k'' -\hat y_k\Vert}\Big)\in N^p_{\mathcal C\times \R^m}((\hat x_k,\hat y_k),\gph F).
$$
Without loss of generality, we assume that $\left(-\tilde\kappa\dfrac{\hat x_k-x_k'}{\Vert \hat x_k-x_k'\Vert}, -\dfrac{\hat y_k-y_k''}{\Vert \hat y_k-y_k''\Vert}\right)\to (x^*,y^*).$ Thus $(x^*,y^*)\in N_{\mathcal C\times \R^m}((\bar x,\bar y),\gph F),$ i.e., $(-y^*, x^*)\in \gph D^*_{\mathcal C}F(\bar x,\bar y),$ and satisfies
$$
\Vert x^*\Vert=\tilde \kappa > \dfrac{\kappa+\tilde\kappa}{2}=\dfrac{\kappa+\tilde\kappa}{2}\Vert y^*\Vert.
$$
Inequality \eqref{thm1-eqa} is therefore not valid with $\bar\kappa:=\dfrac{\kappa+\tilde\kappa}{2}>\kappa.$ This is a contradiction.

In both cases, there are always contradictions. So $F$ has the Aubin property with respect to $\mathcal C$ around $(\bar x,\bar y)$ for any constant $\tilde\kappa>\kappa.$ Thus $\mathfrak{a}_{\mathcal C}F(\bar x,\bar y)\le\kappa.$
Let us now prove  formula \eqref{lipschit-con}. To do this, we prove  that both $\Vert D^*_{\mathcal C}F(\bar x,\bar y)\Vert\le \mathfrak{a}_{\mathcal C}F(\bar x,\bar y)$ and $\Vert D^*_{\mathcal C}F(\bar x,\bar y)\Vert\ge \mathfrak{a}_{\mathcal C}F(\bar x,\bar y)$ hold. First, let us prove $\Vert D^*_{\mathcal C}F(\bar x,\bar y)\Vert\le \mathfrak{a}_{\mathcal C}F(\bar x,\bar y).$ Taking an arbitrarily $\kappa\ge \mathfrak{a}_{\mathcal C}F(\bar x,\bar y),$ one sees that $F$ has the Aubin property around $(\bar x,\bar y)$ with respect to $\mathcal C$, so  inequality \eqref{thm1-eqa} holds with $\kappa,$ which  means that
$$
\Vert x^*\Vert\le \kappa \Vert y^*\Vert\; \forall (y^*,x^*)\in \gph D^*_{\mathcal C}F(\bar x,\bar y).
$$
Thus, for any $y^*\in \B,$ t $\Vert x^*\Vert\le \kappa$ for any $x^*\in D^*_{\mathcal C}F(\bar x,\bar y)(y^*)$, which follows that
$$
\Vert D^*_{\mathcal C}F(\bar x,\bar y)\Vert:=\sup\left\{\Vert x^*\Vert\mid y^*\in \B, x^*\in D^*_{\mathcal C}F(\bar x,\bar y)(y^*)\right\}\le \kappa.
$$
Since $\kappa$ is arbitrarily taken in $\big[\mathfrak{a}_{\mathcal C}F(\bar x,\bar y),\infty\big),$ we obtain $\Vert D^*_{\mathcal C}F(\bar x,\bar y)\Vert\le \mathfrak{a}_{\mathcal C}F(\bar x,\bar y).$ It remains to prove $\Vert D^*_{\mathcal C}F(\bar x,\bar y)\Vert\ge \mathfrak{a}_{\mathcal C}F(\bar x,\bar y).$ Suppose by the contradiction that $\Vert D^*_{\mathcal C}F(\bar x,\bar y)\Vert< \mathfrak{a}_{\mathcal C}F(\bar x,\bar y).$ Taking $\kappa:=\frac{\Vert D^*_{\mathcal C}F(\bar x,\bar y)\Vert+ \mathfrak{a}_{\mathcal C}F(\bar x,\bar y)}{2},$ we obtain
$$
\Vert D^*_{\mathcal C}F(\bar x,\bar y)\Vert<\kappa< \mathfrak{a}_{\mathcal C}F(\bar x,\bar y),
$$
so $F$ does not have the Aubin property around $(\bar x,\bar y)$ with respect to $\mathcal C$ with constant $\kappa.$ Therefore, there exists $(y^*,x^*)\in \gph D^*_{\mathcal C}F(\bar x,\bar y)$ such that
$$
\Vert x^*\Vert>\kappa\Vert y^*\Vert.
$$
If $y^*=0$, then by setting $\tilde x^*=(\kappa+1)\frac{x^*}{\Vert x^*\Vert},$ we obtain
$$
(0,\tilde x^*)\in \gph D^*_{\mathcal C}F(\bar x,\bar y)\;  \text{ \rm and }\; \Vert\tilde x^*\Vert=\kappa+1>\kappa.
$$
It follows that $$\Vert D^*_{\mathcal C}F(\bar x,\bar y)\Vert:=\sup\{\Vert x^*\Vert \mid y^*\in \B, (y^*,x^*)\in \gph D^*_{\mathcal C}F(\bar x,\bar y)\}\ge \Vert\tilde x^*\Vert>\kappa,$$ which is a contradiction.
If $y^*\ne 0$, then by setting $(\tilde y^*, \tilde x^*):=\frac{(y^*,x^*)}{\Vert y^*\Vert},$ we have $(\tilde y^*,\tilde x^*)\in \gph D^*_{\mathcal C}F(\bar x,\bar y)$, which satisfies
$$
\Vert \tilde x^*\Vert = \left\Vert \frac{x^*}{\Vert y^*\Vert}\right\Vert>\kappa =\kappa\Vert \tilde y^*\Vert.
$$
This implies that $\Vert D^*_{\mathcal C}F(\bar x,\bar y)\Vert:=\sup\{\Vert x^*\Vert \mid y^*\in \B, (y^*,x^*)\in \gph D^*_{\mathcal C}F(\bar x,\bar y)\}>\kappa$, which is a contradiction. Therefore, $\Vert D^*_{\mathcal C}F(\bar x,\bar y)\Vert\ge \mathfrak{a}_{\mathcal C}F(\bar x,\bar y).$
 Hence, the theorem is completely proven.  $\hfill\Box$
\end{proof}

\medskip

By taking $\mathcal C:=\dom F$ in Theorem~\ref{thm1}, we obtain a necessay and sufficient for a multifunction having the relative Aubin property as follows.

\begin{corollary}\label{cor1}
Let $F:\mathbb R^n\rightrightarrows\mathbb R^m$ and $(\bar x,\bar y)\in \gph F.$ Assume that $\dom F$ is a closed and convex set and $F$ has a locally closed graph at $(\bar x,\bar y)$. Then $F$ has the relative Aubin property around $(\bar x,\bar y)$ if and only if
\begin{equation}\label{cor1-eqa}
 \bar D^*F(\bar x,\bar y)(0)=\{0\}.
 \end{equation}
\end{corollary}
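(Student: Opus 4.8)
The plan is to obtain the corollary as an immediate specialization of Theorem~\ref{thm1} to the choice $\mathcal C:=\dom F$, so the work is essentially a matter of checking that the hypotheses transfer and that the ``relative'' objects are exactly the ``$\mathcal C=\dom F$'' instances.

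First I would verify the standing hypotheses of Theorem~\ref{thm1} for this $\mathcal C$. By assumption $\dom F$ is closed and convex, so $\mathcal C$ is a closed convex subset of $\mathbb R^n$. Since $(\bar x,\bar y)\in\gph F$ gives $\bar y\in F(\bar x)$, the value $F(\bar x)$ is nonempty, whence $\bar x\in\dom F=\mathcal C$; and the locally closed graph assumption is inherited verbatim. Thus all requirements of Theorem~\ref{thm1} hold with $\mathcal C=\dom F$.

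Next I would record the definitional identifications. By the definition of the relative Aubin property (the case $\mathcal C=\dom F$ in the definition of the Aubin property with respect to a set), $F$ has the relative Aubin property around $(\bar x,\bar y)$ if and only if $F$ has the Aubin property with respect to $\dom F$ around $(\bar x,\bar y)$. Likewise, by the convention introduced in Definition~\ref{coderivative-wrt}, the relative limiting coderivative satisfies $\bar D^*F(\bar x,\bar y)=D^*_{\dom F}F(\bar x,\bar y)$; in particular $\bar D^*F(\bar x,\bar y)(0)=D^*_{\dom F}F(\bar x,\bar y)(0)$.

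Finally, I would invoke the ``Consequently'' assertion of Theorem~\ref{thm1}, namely that $F$ has the Aubin property with respect to $\mathcal C$ around $(\bar x,\bar y)$ if and only if $D^*_{\mathcal C}F(\bar x,\bar y)(0)=\{0\}$. Substituting $\mathcal C=\dom F$ and rewriting $D^*_{\dom F}F$ as $\bar D^*F$ by the identification above yields exactly \eqref{cor1-eqa}, completing the proof. The only obstacle here is bookkeeping rather than analysis: one must state cleanly the equivalence of ``relative'' with the instance $\mathcal C=\dom F$ and justify $\bar x\in\dom F$; there is no further analytic content beyond Theorem~\ref{thm1}.
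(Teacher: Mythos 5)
Your proposal is correct and matches the paper exactly: the paper gives no separate proof of Corollary~\ref{cor1}, presenting it as the immediate specialization of Theorem~\ref{thm1} to $\mathcal C:=\dom F$ (``By taking $\mathcal C:=\dom F$ in Theorem~\ref{thm1}\dots''), which is precisely your argument. Your additional bookkeeping --- checking $\bar x\in\dom F$, the closed convex hypothesis, and the identifications of the ``relative'' notions with the $\mathcal C=\dom F$ instances --- is a faithful filling-in of what the paper leaves implicit.
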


Similar to the projectional coderivative with respect to a set, the limiting coderviative with respect to a set is more useful than the directional limiting coderivative in identifying the Aubin property with respect to a set. To see this statement, let us reconsider the following example which was presented in \cite[Example~2.1]{MLYY23}.

\begin{example}\label{exam2}
Consider the multifunction $F:\mathbb R^2\rightrightarrows\R^2$ given by
$$
F(x,y)=\left\{(u,v)^{\top}\in\R^2_+\mid M(u,v)^{\top}+(x,y)^{\top}\in \R^2_+, \langle (u,v)^{\top}, M(u,v)^{\top}+(x,y)^{\top}\rangle=0\right\},
$$
where
$$
M:=\begin{bmatrix}
-1&0\\1&1
\end{bmatrix} \text{ \rm and } (x,y)^{\top}:=\begin{pmatrix}
x\\y
\end{pmatrix}.
$$
It follows that
\begin{align*}
    \dom F&=\R_+\times \R \; \text{ \rm and }\\
    \gph F&=\{(x,y,u,v)^{\top}\in \R^4\mid x,u,v\ge 0, x\ge u, y+u+v\ge 0 \\&\qquad \qquad \qquad \qquad \qquad u(x-u)=0, v(u+v+y)=0\}.
\end{align*}
Picking $\bar z:=(\bar x,\bar y)=(0,0), \bar w:=(\bar u,\bar v)=(0,0)$ and $\mathcal C:=\dom F$, we have $\gph F_{\mathcal C}=\gph F$. We now prove that $\bar D^*F(\bar z,\bar w)(0,0)=\{(0,0)\}$, which means that $z^*=0$ whenever $z^*\in \R^2$ with $(z^*,0)\in  N_{\mathcal C\times \R^2}((\bar z,\bar w),\gph F).$ Let $z^*\in \R^2$ satisfy $(z^*,0)\in N((\bar z,\bar w),\gph F).$ There exist by the definition sequences $(z_k,w_k):=(x_k,y_k,u_k,v_k)\to (0,0,0,0), (z_k^*,w_k^*)\to (z^*,0)$ such that \begin{align}(z_k^*,w_k^*)&\in
N^{p}_{\mathcal C\times \R^2}((z_k,w_k),\gph F)\nonumber\\
&=\left\{(z^*,w^*)\in N^{p}((z_k,w_k),\gph F)\mid \exists p>0: (z_k,w_k)+p(z^*,w^*)\in \dom F\times\R^2\right\}\nonumber\\
&=\left\{(z^*,w^*)\in N^{p}((z_k,w_k),\gph F)\mid \exists p>0: z_k+pz^*\in \R_+\times\R\right\}\nonumber\\
&\subset\left\{(z^*,w^*)\in N((z_k,w_k),\gph F)\mid \exists p>0: z_k+pz^*\in \R_+\times\R\right\}. \label{exam2-eq1}
\end{align}
Thank to the computation in \cite[Example~2.1]{MLYY23}, we obtain
\begin{equation}\label{exam2-eq2}
\gph F=\left\{(z,w)\in \R^2\times\R^2\left\vert
\begin{array}{cc}
w_i=0, (Mw+z)_i>0 & \text{ \rm if } i\in I_1;\\
w_i>0, (Mw+z)_i=0& \text{ \rm if } i\in I_2;\\
w_i=0, (Mw+z)_i=0& \text{ \rm if } i\in I_3
\end{array}\right. \right\}
\end{equation}
and
\begin{equation}\label{exam2-eq3}
N((z_k,w_k),\gph F)=\left\{(z^*,M^{\top}z^*+w^*)\left\vert
\begin{array}{cc}         (z^*_i,w^*_i)\in \{0\}\times\R & \text{ \rm if } i\in I_1;\\
(z_i^*,w_i^*)\in \R\times\{0\} & \text{ \rm if } i\in I_2;\\
(z^*_i,w^*_i)\in\Omega& \text{ \rm if } i\in I_3        	\end{array}\right. \right\},
\end{equation}
where $\Omega:=(\R\times\{0\})\cup (\{0\}\times\R)\cup\R^2_-$ and $I_1\cup I_2\cup I_3=\{1,2\}$ with $I_i\cap I_j=\emptyset$ for all $i,j\in\{1,2,3\}, i\ne j.$ By the computation, we obtain
\begin{equation}\label{exam2-eq4} Mw_k+z_k=\begin{bmatrix}x_k-u_k\\u_k+v_k+y_k\end{bmatrix} \text{ \rm and } M^{\top}z^*_k+w^*_k=\begin{bmatrix}w^*_{k1}-z^*_{k1}+z^*_{k2}\\ z^*_{k2}+w^*_{k2}\end{bmatrix}.
\end{equation}

We now consider the following cases and subcases.
\begin{itemize}
\item[(1)] $I_1=\{1,2\},I_2=\emptyset,$ and $I_3=\emptyset.$ In this case, we have $u_k=0, x_k>0, v_k=0, y_k>0$, and $x_k+pz_{k1}^*\in\R_+,$ which  implies from \eqref{exam2-eq3} that $z^*_{k1}=z^*_{k2}=0$. It follows that $z^*=0.$
\item[(2)] $I_1=\{1\}, I_2=\{2\},$ and  $I_3=\emptyset.$ In this case, we have $u_k=0,x_k>u_k, v_k>0,$ $u_k+y_k=0,$ and    $x_k+p_kz_{k1}^*\in\R_+$, which  implies from \eqref{exam2-eq3} that $z^*_{k1}=0, w_{k2}^*=0.$ By \eqref{exam2-eq4}, we obtain
$$
M^{\top}z^*_k+w^*_k=\begin{bmatrix}w^*_{k1}+z^*_{k2}\\ z^*_{k2}
\end{bmatrix},
$$
which follows that $z^*_{k2}\to 0$ and $w^*_{k1}\to 0$ whenever $M^{\top}z^*_k+w^*_k\to 0$. Thus $z^*=0$.
\item[(3)] $I_1=\{1\}, I_2=\emptyset,$ and $I_3=\{2\}.$ In this case, we have $u_k=0,x+k>u_k,v_k=0,y_k=0$, and $x_k+p_kz_{k1}^*\ge 0,$ which implies from \eqref{exam2-eq3} that $z_{k1}^*=0$ and $(z_{k2}^*,w_{k2}^*)\in\Omega.$ We consider the following three subcase.
\item[(3.1)] $z_{k2}^*\in\R, w_{k2}^*=0.$ We have  $M^{\top}z^*_k+w^*_k=\begin{bmatrix}w^*_{k1}+z_{k2}^*\\ z^*_{k2}\end{bmatrix}.$ Using the similar arguments as in (2), we conclude  $z^*=0.$
\item[(3.2)] $z^*_{k2}=0, w^*_{k2}\in \R.$ In this case, it is clear that $z^*=0.$
\item[(3.3)] $z_{k2}^*\le 0, w_{k2}^*\le 0.$ By \eqref{exam2-eq4}, we have
$$
M^{\top}z^*_k+w^*_k=\begin{bmatrix}w^*_{k1}+z^*_{k2}\\ z^*_{k2}+w_{k2}^*\end{bmatrix},
$$
which implies that $z_{k2}^*\to 0$ and $w_{k2}^*\to 0$ whenever $M^{\top}z^*_k+w^*_k\to 0.$ Therefore, $z^*=0.$
\item[(4)] $I_1=\{2\}, I_2=\{1\},$ and $I_3=\emptyset.$ In this case, $v_k=0, u_k+y_k>0, u_k>0, x_k=u_k$, and $x_k+p_kz_{k1}^*\ge 0$, which   implies from \eqref{exam2-eq3} that $z^*_{k2}=0, w^*_{k1}=0.$ We obtani  by \eqref{exam2-eq4} that
$$
M^{\top}z^*_k+w^*_k=\begin{bmatrix}-z^*_{k1}\\ w_{k2}^*\end{bmatrix}.
$$
Therefore, if $M^{\top}z^*_k+w^*_k\to 0$, then $w^*_{k2}\to 0$, $z_{k1}^*\to 0$. Thus $z^*=0.$
\item[(5)] $I_1=\{2\}, I_2=\emptyset,$ and $I_3=\{1\}.$ In this case, we have $v_k=0, u_k=0,y_k>0, x_k=0$, and $x_k+p_kz_{k1}^*\ge 0$, which implies that $z_{k1}^*\ge 0.$ Moreover, we obtain  from \eqref{exam2-eq3} that  $z^*_{k2}=0.$ We consider the following three subcases.
\item[(5.1)] $w_{k1}^*=0.$ We obtain by \eqref{exam2-eq4} that
$$
M^{\top}z^*_k+w^*_k=\begin{bmatrix}-z^*_{k1}\\ w_{k2}^*\end{bmatrix}.
$$
Using the similar arguments as in  (4), we reach $z^*=0.$
\item[(5.2)] $z_{k1}^*=0.$ It is clear that $z^*=0.$
\item[(5.3)] $z_{k1}^*\le 0, w_{k1}^*\le 0.$ Since $z_{k1}^*\ge 0,$ we have  $z_{k1}^*=0$. Thus $z^*=0.$
\item[(6)] $I_1=I_3=\emptyset, I_2=\{1,2\}.$ In this case, we have $u_k=v_k=0, x_k>0, y_k>0,$ $x_k+p_kz_{k1}^*\ge~0.$ From \eqref{exam2-eq3}, we obtain $w_{k1}^*=w_{k2}^*=0.$  Taking \eqref{exam2-eq4} into account, we see that
$$
M^{\top}z^*_k+w^*_k=\begin{bmatrix}z_{k2}^*-z^*_{k1}\\ z_{k2}^*\end{bmatrix}.
$$
Thus $M^{\top}z^*_k+w^*_k\to 0$ implies $z_{k2}^*\to 0, z_{k1}^*\to 0$ and thus $z^*=0.$
\item[(7)] $I_1=\emptyset, I_2=\{1\},$ and $I_3=\{2\}.$ We have $u_k=v_k=0, y_k=0, x_k>0$, and $x_k+p_kz_{k1}^*\ge 0.$ From \eqref{exam2-eq3}, we obtain $w_{k1}^*=0.$ We consider the following three subcases.
\item[(7.1)] $w_{k2}^*=0.$ Using the similar arguments as in (6), we have $z^*=0.$
\item[(7.2)] $z_{k2}^*=0.$ Using the similar arguments as in (4), we obtain  $z^*=0.$
\item[(7.3)] $z_{k2}^*\le 0, w_{k2}^*\le 0.$ From \eqref{exam2-eq4}, we conclude
$$
M^{\top}z^*_k+w^*_k=\begin{bmatrix}z_{k2}^*-z^*_{k1}\\ z_{k2}^*+w_{k2}^*\end{bmatrix}.
$$
Thus $M^{\top}z^*_k+w^*_k\to 0$ implies $z_{k2}^*\to 0, w_{k2}^*\to 0$ and $z_{k1}^*\to 0.$ Therefore, $z^*=0.$
\item[(8)] $I_1=\emptyset, I_2=\{2\}, $ and $I_1=\{1\}.$ We have  $u_k=0,x_k=0,v_k>0,v_k+y_k=0$, and $x_k+p_kz_{k1}^*\ge 0$, which implies $z_{k1}^*\ge 0.$ Using \eqref{exam2-eq3}, we have  $w_{k2}^*=0.$ We consider the following three subcases.
\item[(8.1)] $w_{k1}^*=0.$ Using the similar arguments as in (6), we have $z^*=0.$
\item[(8.2)] $z_{k1}^*=0.$ By \eqref{exam2-eq4}, we assert
$$
M^{\top}z^*_k+w^*_k=\begin{bmatrix}z_{k2}^*+w^*_{k1}\\ z_{k2}^*\end{bmatrix}.
$$
Thus $M^{\top}z^*_k+w^*_k\to 0$ implies $z_{k2}^*\to 0$, so $z^*=0.$
\item[(8.3)] $z_{k1}^*\le 0, w_{k1}^*\le 0.$ It implies $z_{k1}^*=0$ due to $z_{k1}^*\ge 0.$ Using the similar arguments as in (8.2), we obtain  $z^*=0.$
\item[(9)] $I_1=I_2=\emptyset, I_3=\{1,2\}.$ In this case, we have $u_k=v_k=x_k=y_k=0$ and $x_k+p_kz_{k1}^*\ge 0$. Thus $z_{k1}^*\ge 0.$ We consider the following nine subcases.
\item[(9.1)] $w_{k1}^*=w_{k2}^*=0.$ Using  the similar arguments as in (8.1), we have $z^*=0.$
\item[(9.2)] $w_{k1}^*=z_{k2}^*=0.$ Using  the similar arguments as in (7.2), we see $z^*=0.$
\item[(9.3)] $z_{k1}^*=w_{k2}^*=0.$ Using  the similar arguments as in (8.2), we obtain  $z^*=0.$
\item[(9.4)] $z_{k1}^*=z_{k2}^*=0.$ It is clear that $z^*=0.$
\item[(9.5)] $z_{k1}^*\le 0, w_{k1}^*\le 0,$ and $z_{k2}^*=0.$ Combining the fact that $z_{k1}^*\ge 0$, we assert  $z_{k1}^*=0$. Thus $z^*=0.$
\item[(9.6)] $z_{k1}\le 0, w_{k1}\le 0,$ and $w_{k2}=0.$ Combining the fact that $z_{k1}^*\ge 0$, we have $z_{k1}^*=0$. Using  the similar arguments as in (8.2), we find $z^*=0.$
\item[(9.7)] $z_{k1}\le 0, w_{k1}\le 0, z_{k2}^*\le 0,$ and $w_{k2}\le 0.$ Combining the fact that $z_{k1}^*\ge 0$, we have $z_{k1}^*=0$. Using  the similar arguments as in (3.3), we obtain $z^*=0.$
\item[(9.8)] $z_{k1}^*=0, z_{k2}^*\le 0, w_{k2}^*\le 0.$ Using  the similar arguments as in (3.3), we see that  $z^*=0.$
\item[(9.9)] $z_{k2}^*=0, z_{k1}^*\le 0, w_{k1}^*\le 0.$ Combining the fact that $z_{k1}^*\ge 0$, we conclude that  $z_{k1}^*=0$. Thus $z^*=0.$
\end{itemize}
All the cases and subcases above demonstrate  that $$z^*=0  \text{ \rm whenever } (z^*,0)\in \bar N_{\mathcal C\times\R^2}((\bar z,\bar w),\gph F).$$ Thus $\bar D^*F(\bar z,\bar w)(0,0)=\{(0,0)\}.$ Using Corollary~\ref{cor1}, we assert that $F$ has the relative Aubin property around $(\bar z,\bar w).$
\end{example}

\begin{remark}
From Example~\ref{exam2}, we see that the existence of $p>0$ satisfying $\bar x+px^*\in\mathcal C$ in the definition of the proximal normal cone (see Definition~\ref{normal-cone-wrt}) is effective. Indeed, if we drop this condition in the definition, then, in the cases of (5.3), (8.3), (9.5), (9.6), (9.7), and (9.9), we have $z^*\le 0$, which cannot obtain $z^*=0.$
\end{remark}

\section{Limiting subdifferential with respect to a set and its applications}
In this section, we first introduce the concept of subdifferentials with respect to a set. Then we use these subdifferentials to provide criterions for the Aupin property with respect to a set of profile mappings as well as to state optimality conditions for optimization problems with set constraints.

\begin{definition}
{\rm Consider the extended real-valued function $f:\mathbb R^n\to \bar\R.$ Let $\mathcal C$ be a closed convex subset of $\mathbb R^n$ and $\bar x\in\mathcal C.$
}

{\rm (i) A vector $x^*\in \mathbb R^n$ is called a {\it proximal subgradient with respect to} $\mathcal C$ of $f$ at $\bar x$ if
$$
(x^*,-1)\in N^p_{\mathcal C\times\R}((\bar x,f(\bar x)),\epi f).
$$
The set of all of proximal subgradients with respect to $\mathcal C$ of $f$ at $\bar x$, denoted by $\partial^p_{\mathcal C}f(\bar x),$ is called the {\it proximal subdifferential with respect to} $\mathcal C$ of $f$ at $\bar x.$ We have
$$
\partial^p_{\mathcal C}f(\bar x):=\left\{x^*\in \mathbb R^n\mid (x^*,-1)\in N^p_{\mathcal C\times \R}((\bar x,f(\bar x)),\epi f)\right\}.
$$
}

{\rm (ii) The set
$\partial_{\mathcal C}f(\bar x):=\left\{x^*\in \mathbb R^n\mid (x^*,-1)\in N_{\mathcal C\times \R}((\bar x,f(\bar x)),\epi f)\right\}$ is called the {\it limiting subdifferential with respect to} $\mathcal C$ of $f$ at $\bar x.$ We also call every vector $x^*\in \partial_{\mathcal C}f(\bar x)$ to be a {\it limiting subgradient with respect to} $\mathcal C$ of $f$ at $\bar x.$
}

{\rm (iii) The {\it singular/horizon subdifferential with respect to} $\mathcal C$ of $f$ at $\bar x,$ denoted $\partial^{\infty}_{\mathcal C}f(\bar x)$, is defined by
$$
\partial^{\infty}_{\mathcal C}f(\bar x):=\left\{x^*\in \mathbb R^n\mid (x^*,0)\in N_{\mathcal C\times\R}((\bar x,f(\bar x)),\epi f)\right\}.
$$
}

{\rm If $\bar x\notin \dom f\cap\mathcal C$, then we put $\partial^p_{\mathcal C}f(\bar x):=\partial_{\mathcal C}f(\bar x):=\partial^{\infty}_{\mathcal C}f(\bar x):=\emptyset.$ In the case of $\mathcal C=\dom f$, we write $\bar\partial^pf(\bar x),\bar\partial f(\bar x), \bar\partial^{\infty}f(\bar x)$ instead of $\partial^p_{\mathcal C}f(\bar x), \partial_{\mathcal C}f(\bar x),\partial^{\infty}_{\mathcal C}f(\bar x)$ and call the {\it relative proximal, limiting, horizon subdifferentials}, respectively.
}
\end{definition}

\begin{proposition} \label{pro4} Let  $f:\mathbb R^n\to \bar\R$ and $\mathcal C$ be a closed and convex subset of $\mathbb R^n$. Let $\bar x\in \mathcal C.$ Then the following assertions hold:

{\rm (i)} $\partial_{\mathcal C}f(\bar x) = D^*_{\mathcal C}\mathcal E^{f}(\bar x)(1)$ and $\partial^{\infty}_{\mathcal C}f(\bar x) = D^*_{\mathcal C}\mathcal E^{f}(\bar x)(0)$.

{\rm (ii)} $x^*\in \partial^p_{\mathcal C}f(\bar x)$ if and only if there exists $p>0$ such that $\bar x+px^*\in \mathcal C$ and the following inequality holds for all $x\in \mathcal C$
\begin{equation}\label{subdiff-eq}
\langle x^*, x-\bar x\rangle -f(x)+f(\bar x)\le \dfrac{1}{2p}(\Vert x-\bar x\Vert^2+(f(x)-f(\bar x))^2).
\end{equation}
Consequently, if $\mathcal C=\dom f$, then \eqref{subdiff-eq} hold for any $x\in \R^n$, that is,
\begin{align}
\bar\partial^p f(\bar x)&=\Big\{x^*\in \mathbb R^n\mid \exists p>0 \text{ \rm satisfying } \bar x+px^*\in \mathcal C, \nonumber\\
&\langle x^*, x-\bar x\rangle -f(x)+f(\bar x)\le \dfrac{1}{2p}\big(\Vert x-\bar x\Vert^2+(f(x)-f(\bar x))^2\big)\; \forall x\in \R^n\Big\}.\label{subdiff-eq2}
\end{align}
\end{proposition}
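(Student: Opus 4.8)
The plan is to reduce both assertions to the descriptions of the proximal and limiting normal cones with respect to a set that are already available, applied to $\Omega=\epi f$ with reference set $\mathcal C\times\R$ at the point $(\bar x,f(\bar x))$. Throughout I use that $f$ is lower semi-continuous around $\bar x$ (so that $\epi f$ is locally closed there, which is what makes $N_{\mathcal C\times\R}(\cdot,\epi f)$ meaningful) and that $\bar x\in\dom f\cap\mathcal C$, since otherwise every subdifferential is empty by convention.

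For (i), the first ingredient is the identity $\gph(\mathcal E^f)_{\mathcal C}=\gph\mathcal E^f\cap(\mathcal C\times\R)=\epi f\cap(\mathcal C\times\R)$, which follows from $\gph\mathcal E^f=\epi f$ and the formula for $\gph F_{\mathcal C}$ recorded in the Preliminaries. The second is that $N^p_{\mathcal C\times\R}(z,\Omega)$ depends on $\Omega$ only through $\Omega\cap(\mathcal C\times\R)$, because its definition is phrased via $\Pi^{-1}(z,\Omega\cap(\mathcal C\times\R))$; passing to the sequential Painlev\'{e}--Kuratowski outer limit, the same holds for $N_{\mathcal C\times\R}(z,\Omega)$. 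Hence $N_{\mathcal C\times\R}((\bar x,f(\bar x)),\epi f)=N_{\mathcal C\times\R}((\bar x,f(\bar x)),\gph(\mathcal E^f)_{\mathcal C})$, and comparing the definition of $\partial_{\mathcal C}f(\bar x)$ (resp. $\partial^{\infty}_{\mathcal C}f(\bar x)$) with that of the coderivative $D^*_{\mathcal C}\mathcal E^f$ (whose base point is $(\bar x,f(\bar x))$) evaluated at $y^*=1$ (resp. $y^*=0$) yields the two equalities simultaneously.

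For (ii), I apply Proposition~\ref{pro1} with $\Omega=\epi f$, reference set $\mathcal C\times\R$, base point $(\bar x,f(\bar x))$ and test vector $(x^*,-1)$. Since the second coordinate is unconstrained, the membership $(\bar x,f(\bar x))+p(x^*,-1)\in\mathcal C\times\R$ is exactly $\bar x+px^*\in\mathcal C$; expanding the inner product and the squared norm in Proposition~\ref{pro1}(i) turns the defining inequality into \eqref{subdiff-eq}, but with the running point $(x,\alpha)$ ranging over all of $\epi f\cap(\mathcal C\times\R)$, that is over $x\in\mathcal C\cap\dom f$ and all heights $\alpha\ge f(x)$. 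The forward implication is then immediate: specializing to $\alpha=f(x)$ gives \eqref{subdiff-eq} for every $x\in\mathcal C\cap\dom f$, while for $x\in\mathcal C\setminus\dom f$ both sides are $\mp\infty$ and the inequality is vacuous. This also settles the ``consequently'' clause, since for $\mathcal C=\dom f$ the ranges $x\in\mathcal C$ and $x\in\R^n$ yield the same nontrivial inequalities, which is \eqref{subdiff-eq2}.

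The backward implication is where the real work lies, and it is the step I expect to be the main obstacle: from \eqref{subdiff-eq}, which only controls the graph of $f$, I must recover the proximal normal inequality on the whole epigraph. Here I switch to the local characterization \eqref{prox-nor-cone-eq2a} of $N^p_{\mathcal C\times\R}$, so that only points $(x,\alpha)$ in a small ball $\B((\bar x,f(\bar x)),\theta)$ matter. Writing $v:=f(x)-f(\bar x)$ and $u:=\alpha-f(\bar x)\ge v$ and choosing $\delta:=\frac{1}{2p}$ with the given $p$, the target inequality reduces, after inserting \eqref{subdiff-eq} and cancelling the common $\frac{1}{2p}\Vert x-\bar x\Vert^2$, to $w\big(1+\frac{u+v}{2p}\big)\ge 0$ with $w:=u-v\ge 0$. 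Lower semi-continuity lets me shrink $\theta$ so that $v>-p$ on the ball, whence $u\ge v>-p$ gives $u+v>-2p$ and the bracket is positive, closing the argument (with the same $p$ serving for $\bar x+px^*\in\mathcal C$). The delicate point is exactly that downward oscillation of $f$ near $\bar x$ must be controlled: without local closedness of $\epi f$ the passage from the graph to the epigraph genuinely fails, so that hypothesis is essential rather than cosmetic.
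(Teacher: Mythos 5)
Your proof is correct (under the lower semicontinuity you assume), and while part (i) and the ``only if'' half of (ii) coincide with the paper's own argument --- both are definitional unwindings using $\gph\mathcal E^f=\epi f$, Proposition~\ref{pro1}(i) applied to $\Omega=\epi f$ with reference set $\mathcal C\times\R$, and the specialization $r=f(x)$ --- your treatment of the ``if'' half takes a genuinely different route, and a necessary one. At that point the paper simply asserts that \eqref{subdiff-eq} yields, with the \emph{same} $p$, the proximal inequality at \emph{every} $(x,r)\in\epi f_{\mathcal C}$; by exactly the computation you perform, that assertion amounts to $(u-v)\left(1+\frac{u+v}{2p}\right)\ge 0$ with $v:=f(x)-f(\bar x)$, $u:=r-f(\bar x)$, which requires $u+v\ge-2p$ and can fail at heights $u\in(v,-2p-v)$ above any point where $f(x)<f(\bar x)-p$. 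So the paper's proof of this direction has a gap at precisely the step you singled out as the main obstacle, and your detour through the local description \eqref{prox-nor-cone-eq2a}, with lower semicontinuity used to shrink $\theta$ so that $v>-p$ (hence $u+v>-2p$) on the ball, is what makes the graph-to-epigraph passage legitimate. Your closing remark that local closedness of $\epi f$ is essential rather than cosmetic is also substantiated: Proposition~\ref{pro4} as stated omits the hypothesis $f\in\mathcal F(\bar x)$ (which its neighbours, Proposition~\ref{pro5} and Theorems~\ref{thm2} and~\ref{thm3}, do impose), and without it the ``if'' direction is genuinely false. Indeed, take $n=1$, $\mathcal C=\R$, $\bar x=0$, $f(1/k)=-1$ for all $k\in\N$ and $f=0$ elsewhere (so $f(\bar x)=0$); then $x^*=0$ satisfies \eqref{subdiff-eq} with $p=1/2$, yet for every $q>0$ the epigraph point $(1/k,-\min\{q,1\})$ with $k$ large violates the inequality of Proposition~\ref{pro1}(i), so $0\notin\partial^p_{\mathcal C}f(0)$. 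In short, the paper's one-line global implication would be shorter if it were valid, but it is not; your local argument is the correct proof, at the modest price of invoking \eqref{prox-nor-cone-eq2a} and making the (indispensable) semicontinuity hypothesis explicit.
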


\begin{proof}
{\rm (i)} It directly implies from the definition.

{\rm (ii)} We first demonstrate the ``if'' part. Let $x^*\in \mathbb R^n$ satisfy \eqref{subdiff-eq}. For any $(x,r)\in \epi f_{\mathcal C}$, we have $x\in \dom f\cap \mathcal C, r\ge f_{\mathcal C}(x)=f(x)$ and
$$
\langle x^*, x-\bar x\rangle -r+f(\bar x)\le \dfrac{1}{2p}\left(\Vert x-\bar x\Vert^2+(r-f(\bar x))^2\right),
$$
where $p>0$ satisfies $\bar x+px^*\in \mathcal C.$ It follows that $(\bar x, f(\bar x))+p(x^*,-1)\in \mathcal C\times \R$ and
$$
\left\langle (x^*,-1),(x,r)-(\bar x, f(\bar x))\right\rangle\le \dfrac{1}{2p}\left(\Vert (x,r)-(\bar x,f(\bar x))\Vert\right)^2\; \forall (x,r)\in \epi f_{\mathcal C},
$$ which gives us that $(x^*,-1)\in N^p_{\mathcal C}((\bar x,f(\bar x)),\epi f)$. Thus $x^*\in \partial^p_{\mathcal C} f(\bar x).$

We next prove the ``only if'' part. Let $x^*\in \partial^{p}_{\mathcal C}f(\bar x)$, which  means that $(x^*,-1)\in N^p_{\mathcal C}((\bar x,f(\bar x)),\epi f_{\mathcal C}).$ It implies from Proposition~\ref{pro1}~(i) that there exists $p>0$ such that $(\bar x, f(\bar x))+p(x^*,-1)\in \mathcal C\times \R$ and
\begin{equation}\label{pro4-eq1}
\langle (x^*,-1), (x,r)-(\bar x, f(\bar x))\rangle\le \dfrac{1}{2p}\Vert (x,r)-(\bar x,f(\bar x))\Vert^2\; \forall (x,r)\in \epi f_{\mathcal C}.
\end{equation}
For any $x\in \mathcal C,$ if $x\notin \dom f$, then $f(x)=\infty$. Thus \eqref{subdiff-eq} trivially hold. If $x\in \dom f$, then $f(x)=f_{\mathcal C}(x).$ It implies from \eqref{pro4-eq1} with $r=f_{\mathcal C}(x)=f(x)$ that there exists $p>0$ such that $\bar x+px^*\in \mathcal C$ and
$$
\langle (x^*,-1), (x,f(x))-(\bar x, f(\bar x))\rangle\le \dfrac{1}{2p}\Vert (x,f(x))-(\bar x,f(\bar x))\Vert^2,
$$
which follows \eqref{subdiff-eq}.

Now, let $\mathcal C=\dom f$. Then, for any $x\in \R^n\setminus \mathcal C,$   inequality \eqref{subdiff-eq} is trivially hold. Thus we obtain \eqref{subdiff-eq2} immediately.
\end{proof}

 \medskip

In what follows, we present some applications of the limiting subdifferential with respect to a set. The first application that interests us is the characterization of the locally Lipschitz continuity of single-valued functions. The concept of the locally Lipschitz continuity with respect to a set of a single-valued mapping was presented in Defintion~\ref{lips-vec}.

It is known that a mapping $f\in \mathcal F(\mathcal C)$ is locally Lipschitz continuous with respect to $\mathcal C\subset \dom f$ around $\bar x\in \mathcal C$ if and only if its profile mapping $\mathcal E^{f}:\mathbb R^n\rightrightarrows \R$ defined by
$$
\mathcal E^f(x):=\begin{cases}
\left\{\alpha\in \R\mid \alpha\ge f(x)\right\}& \text{ \rm if } x\in \dom f,\\
\emptyset & \text{ \rm otherwise,}
\end{cases}
$$
has the Aubin property with respect to $\mathcal C$ around $(\bar x,\bar y)$ with $\bar y=f(\bar x).$ Moreover, the following relation holds: $$\mathfrak{a}_{\mathcal C}f(\bar x)=\mathfrak{a}_{\mathcal C}\mathcal E^f(\bar x,\bar y).$$

The above mentions say that we may study the locally Lipschitz continuity with respect to $\mathcal C$ of $f$ around $\bar x$ due to the Aubin property with respect to $\mathcal C$ of $\mathcal E^f$ around $(\bar x,\bar y)$. To do this, we need to find the relation between the $\partial_{\mathcal C}f(\bar x)$ and $D^*_{\mathcal C}\mathcal E^f(\bar x,\bar y).$ The following theorem gives us that relation.

\begin{theorem}\label{thm2}
Let $f:\mathbb R^n\to \bar \R$ and let $\mathcal C$ be closed convex. Let $\bar x\in \mathcal C$ and $f\in \mathcal F(\bar x).$ Then
\begin{equation}\label{thm2-eq}
D^*_{\mathcal C}\mathcal E^f(\bar x,f(\bar x))(\lambda)=\begin{cases}
\lambda\partial_{\mathcal C}f(\bar x) & \text{ \rm if } \lambda>0;\\
\partial^{\infty}_{\mathcal C}f(\bar x) & \text{ \rm if } \lambda=0;\\
\emptyset & \text{ \rm otherwise.}
\end{cases}
\end{equation}
\end{theorem}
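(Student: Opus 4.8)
The plan is to reduce everything to a single statement about the limiting normal cone $N_{\mathcal C\times\R}((\bar x,f(\bar x)),\epi f)$, exploit its conic structure, and isolate the one genuinely nontrivial fact: that the vertical component of every such normal is nonpositive. First I would record that, since $\gph(\mathcal E^f)_{\mathcal C}=\epi f\cap(\mathcal C\times\R)=\epi f_{\mathcal C}$, the definition of the limiting coderivative with respect to a set gives
$$
D^*_{\mathcal C}\mathcal E^f(\bar x,f(\bar x))(\lambda)=\big\{x^*\in\R^n\mid (x^*,-\lambda)\in N_{\mathcal C\times\R}((\bar x,f(\bar x)),\epi f)\big\}.
$$
Abbreviating $N:=N_{\mathcal C\times\R}((\bar x,f(\bar x)),\epi f)$ and comparing with the definitions of $\partial_{\mathcal C}f(\bar x)$ and $\partial^{\infty}_{\mathcal C}f(\bar x)$, the three branches of \eqref{thm2-eq} amount to showing $\{x^*\mid(x^*,-\lambda)\in N\}$ equals $\lambda\partial_{\mathcal C}f(\bar x)$, equals $\partial^{\infty}_{\mathcal C}f(\bar x)$, and is empty, for $\lambda>0$, $\lambda=0$, and $\lambda<0$ respectively. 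The hypothesis $f\in\mathcal F(\bar x)$ makes $\epi f$ locally closed around $(\bar x,f(\bar x))$, so that $N$ is well defined and Proposition~\ref{pro1} applies.

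The key step, and the main obstacle (though a mild one), is the structural lemma that every element of $N$ has nonpositive last coordinate, i.e. $N\subset\R^n\times(-\infty,0]$. I would establish this first at the proximal level. Given $(x^*,\beta)\in N^p_{\mathcal C\times\R}((x_0,\alpha_0),\epi f)$ at any $(x_0,\alpha_0)\in\epi f\cap(\mathcal C\times\R)$, Proposition~\ref{pro1}~(i) supplies $p>0$ with $\langle (x^*,\beta),(x,\alpha)-(x_0,\alpha_0)\rangle\le\frac{1}{2p}\Vert(x,\alpha)-(x_0,\alpha_0)\Vert^2$ for all $(x,\alpha)\in\epi f\cap(\mathcal C\times\R)$. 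Testing against the vertical points $(x_0,\alpha_0+t)$ for $t>0$, which lie in $\epi f\cap(\mathcal C\times\R)$ because $x_0\in\mathcal C$ and $\alpha_0+t\ge\alpha_0\ge f(x_0)$, yields $\beta t\le\frac{t^2}{2p}$, hence $\beta\le\frac{t}{2p}$, and letting $t\to 0^+$ gives $\beta\le 0$. Since $N$ is the Painlev\'{e}--Kuratowski outer limit of such proximal cones along points of $\epi f\cap(\mathcal C\times\R)$ converging to $(\bar x,f(\bar x))$, and each contributing normal has nonpositive last coordinate, passing to the limit preserves $\beta\le 0$.

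With the lemma in hand the three cases close quickly. For $\lambda<0$ one has $-\lambda>0$, so no pair $(x^*,-\lambda)$ can lie in $N\subset\R^n\times(-\infty,0]$, whence $D^*_{\mathcal C}\mathcal E^f(\bar x,f(\bar x))(\lambda)=\emptyset$. For $\lambda=0$ the displayed formula reduces to $\{x^*\mid(x^*,0)\in N\}$, which is exactly $\partial^{\infty}_{\mathcal C}f(\bar x)$ by definition. For $\lambda>0$ I would invoke Proposition~\ref{pro1}~(iv), which asserts that $N$ is a cone: scaling by the positive factor $1/\lambda$ forward and by $\lambda$ back gives the equivalence $(x^*,-\lambda)\in N\iff(\frac{x^*}{\lambda},-1)\in N$. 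Substituting $u^*=x^*/\lambda$ then yields $D^*_{\mathcal C}\mathcal E^f(\bar x,f(\bar x))(\lambda)=\{\lambda u^*\mid(u^*,-1)\in N\}=\lambda\partial_{\mathcal C}f(\bar x)$. These three conclusions are precisely the three branches of \eqref{thm2-eq}, completing the proof.
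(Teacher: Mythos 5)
Your proposal is correct and follows essentially the same route as the paper: the $\lambda>0$ case via the conic property of $N_{\mathcal C\times\R}((\bar x,f(\bar x)),\epi f)$, the $\lambda=0$ case by definition, and the $\lambda<0$ case by testing proximal normals against vertical perturbations $(x_0,\alpha_0+t)$ in the epigraph. The only cosmetic difference is that you package the vertical test as an upfront lemma ($N\subset\R^n\times(-\infty,0]$) applied at arbitrary base points, whereas the paper runs the identical computation inline as a contradiction argument in its Case~3.
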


\begin{proof}
We consider the following three cases:

Case 1. $\lambda>0.$ In this case,
\begin{align*}
D^*_{\mathcal C}\mathcal E^f(\bar x,f(\bar x))(\lambda)&=\left\{x^*\in \R^n\mid (x^*,-\lambda)\in N_{\mathcal C\times\R}\Big((\bar x, f(\bar x)),\gph \mathcal E^f\Big)\right\}.
\end{align*}
Note that $\gph \mathcal E^f_{\mathcal C}=\left\{(x,r)\mid r\ge f(x), x\in \mathcal C\right\}=\epi f_{\mathcal C}.$ By Proposition~\ref{pro1}~(iii), we have
\begin{align*}
D^*_{\mathcal C}\mathcal E^f(\bar x,f(\bar x))(\lambda) &=\left\{x^*\in \R^n\mid (\frac{x^*}{\lambda},-1)\in N_{\mathcal C\times\R}\Big((\bar x, f(\bar x)),\epi f\Big)\right\}\\
&=\left\{x^*\in \R^n\mid \frac{x^*}{\lambda}\in \partial_{\mathcal C}f(\bar x)\right\}.
\end{align*}
Therefore, $D^*_{\mathcal C}\mathcal E^f(\bar x,f(\bar x))(\lambda) =\lambda\partial_{\mathcal C}f(\bar x).$

Case 2. $\lambda=0.$ In this case,
\begin{align*}
D^*_{\mathcal C}\mathcal E^f(\bar x,f(\bar x))(0)&=\left\{x^*\in \R^n\mid (x^*,0)\in N_{\mathcal C\times\R}\Big((\bar x, f(\bar x)),\gph \mathcal E^f\Big)\right\}\\
&=\left\{x^*\in \R^n\mid (x^*,0)\in N_{\mathcal C\times\R}\Big((\bar x, f(\bar x)),\epi f\Big)\right\}\\
&=\left\{x^*\in \R^n\mid x^*\in \partial^{\infty}_{\mathcal C}f(\bar x)\right\}.
\end{align*}
So, we obtain $D^*_{\mathcal C}\mathcal E^f(\bar x,f(\bar x))(0) =\partial^{\infty}_{\mathcal C}f(\bar x).$

Case 3. $\lambda<0.$ We assume the opposite that there exists $x^*\in D^*_{\mathcal C}\mathcal E^f(\bar x,f(\bar x))(\lambda).$ We find, by the definition, sequences $(x_k,r_k)\xrightarrow{\tgph\mathcal E^f_{\mathcal C}} (\bar x,f(\bar x)), (x_k^*,\lambda_k)\to (x^*,\lambda)$ such that
$$
(x_k^*,\lambda_k)\in N^p_{\mathcal C\times\R}((x_k,r_k),\gph\mathcal E^f).
$$
Thus, for each $k\in N$, there exists, due to Proposition~\ref{pro1}~(i), $p_k>0$ such that $(x_k,r_k)+p_k(x_k^*,-\lambda_k)\in \mathcal C\times \R$ and
\begin{equation}\label{thm2-eq3}
\langle x_k^*,x-x_k\rangle-\lambda_k(r-r_k)\le \dfrac{1}{2p_k}\left(\Vert x-x_k\Vert^2+(r-r_k)^2\right).\end{equation}
Pick $x=x_k$ and $r=r_k+\dfrac{p_k}{k}.$ Then $(x,r)\in \gph\mathcal E^f_{\mathcal C}$. Taking \eqref{thm2-eq3} into account, we get $-\lambda_k\frac{p_k}{k}\le \frac{1}{2p_k}\frac{p_k^2}{k^2}$ which is equivalent to $\lambda_k\ge 0.$ This is contradiction to $\lambda_k\to \lambda<0.$

Hence,   $D^*_{\mathcal C}\mathcal E^f(\bar x,f(\bar x))(\lambda)=\emptyset.$
\end{proof}

From Theorems~\ref{thm1} and \ref{thm2}, we obtain the following theorem and corollary.

\begin{theorem}
Let $f:\mathbb R^n\to \bar \R$ and let $\mathcal C$ be closed convex. Let $\bar x\in \mathcal C$ and $f\in \mathcal F(\bar x).$ We consider the following statements:

 {\rm (i)} $f$ is locally Lipschitz continuous with respect to $\mathcal C$ around $\bar x$.

 {\rm (ii)} There exists $\kappa>0$ such that $\Vert x^*\Vert\le \kappa$ whenever $x^*\in \partial_{\mathcal C}f(\bar x).$

 {\rm (iii)} $\partial^{\infty}_{\mathcal C}f(\bar x)=\{0\}.$

 {\rm (iv)} $\partial_{\mathcal C}f(\bar x)$ is nonempty compact set and $\mathfrak{a}_{\mathcal C}f(\bar x)=\max\{\Vert x^*\Vert\mid x^*\in \partial_{\mathcal C}f(\bar x)\}.$

 Then  {\rm (i)} $\Leftrightarrow$ {\rm (ii)} $\Leftrightarrow$ {\rm (iii)} $\implies$ {\rm (iv)}.
\end{theorem}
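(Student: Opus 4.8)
The plan is to route everything through the profile mapping $\mathcal E^f$ and the two criteria already proved. Write $\bar y:=f(\bar x)$ and recall from the discussion preceding the theorem that $f$ is locally Lipschitz continuous with respect to $\mathcal C$ around $\bar x$ exactly when $\mathcal E^f$ has the Aubin property with respect to $\mathcal C$ around $(\bar x,\bar y)$, and that then $\mathfrak a_{\mathcal C}f(\bar x)=\mathfrak a_{\mathcal C}\mathcal E^f(\bar x,\bar y)$. First I would settle (i)$\Leftrightarrow$(iii): by Theorem~\ref{thm1} the Aubin property of $\mathcal E^f$ is equivalent to $D^*_{\mathcal C}\mathcal E^f(\bar x,\bar y)(0)=\{0\}$, and Theorem~\ref{thm2} with $\lambda=0$ identifies $D^*_{\mathcal C}\mathcal E^f(\bar x,\bar y)(0)=\partial^{\infty}_{\mathcal C}f(\bar x)$. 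Chaining these gives (i)$\Leftrightarrow\partial^{\infty}_{\mathcal C}f(\bar x)=\{0\}$, which is precisely (iii); this step is routine.

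Next I would distil a single master formula from the exact-constant part of Theorem~\ref{thm1}. Combining $\mathfrak a_{\mathcal C}\mathcal E^f(\bar x,\bar y)=\|D^*_{\mathcal C}\mathcal E^f(\bar x,\bar y)\|=\sup\{\|x^*\|:\lambda\in\B,\ x^*\in D^*_{\mathcal C}\mathcal E^f(\bar x,\bar y)(\lambda)\}$ with the three cases of Theorem~\ref{thm2} (for $\lambda>0$ one gets $\lambda\,\partial_{\mathcal C}f(\bar x)$, for $\lambda=0$ one gets $\partial^{\infty}_{\mathcal C}f(\bar x)$, for $\lambda<0$ the empty set) and using $|\lambda|\le 1$, I would arrive at
\begin{equation*}
\mathfrak a_{\mathcal C}f(\bar x)=\max\Big\{\sup_{x^*\in\partial_{\mathcal C}f(\bar x)}\|x^*\|,\ \sup_{x^*\in\partial^{\infty}_{\mathcal C}f(\bar x)}\|x^*\|\Big\}.
\end{equation*}
Since $\partial^{\infty}_{\mathcal C}f(\bar x)$ is the $0$-slice of the closed cone $N_{\mathcal C\times\R}((\bar x,\bar y),\epi f)$ (Proposition~\ref{pro1}~(iv)), its supremum of norms is $0$ if it equals $\{0\}$ and $+\infty$ otherwise. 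Assuming (iii), the second term drops out and $\mathfrak a_{\mathcal C}f(\bar x)=\sup_{x^*\in\partial_{\mathcal C}f(\bar x)}\|x^*\|$; as (iii)$\Leftrightarrow$(i) already yields Lipschitz continuity, this supremum is finite, so $\partial_{\mathcal C}f(\bar x)$ is bounded and (ii) holds with any $\kappa\ge\max\{\mathfrak a_{\mathcal C}f(\bar x),1\}$. For (iv) I would further observe that $\partial_{\mathcal C}f(\bar x)$ is the $(-1)$-slice of the same closed cone, hence closed, and therefore compact once bounded; nonemptiness follows because $(\bar x,\bar y)\in\bd\,\epi f$ forces $N_{\mathcal C\times\R}((\bar x,\bar y),\epi f)\ne\{0\}$, while under (iii) every nonzero normal has strictly negative last coordinate (the last coordinate is $\le 0$ by the $\lambda<0$ case of Theorem~\ref{thm2}, and the $0$-slice is trivial), so rescaling to last coordinate $-1$ produces a subgradient. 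The supremum in the master formula is then attained, giving $\mathfrak a_{\mathcal C}f(\bar x)=\max\{\|x^*\|:x^*\in\partial_{\mathcal C}f(\bar x)\}$, which is (iv).

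The main obstacle is closing the loop through (ii)$\implies$(iii) (equivalently (ii)$\implies$(i)); the master formula does \emph{not} deliver it, because a nontrivial $\partial^{\infty}_{\mathcal C}f(\bar x)$ inflates $\mathfrak a_{\mathcal C}f(\bar x)$ to $+\infty$ via its own term while leaving $\partial_{\mathcal C}f(\bar x)$ possibly bounded (indeed possibly empty). My plan here is the contrapositive: assuming $0\ne w\in\partial^{\infty}_{\mathcal C}f(\bar x)$, I would unfold $(w,0)\in N_{\mathcal C\times\R}((\bar x,\bar y),\epi f)$ through Proposition~\ref{pro1}~(v) into proximal normals $(v_k,-\beta_k)\in N^p_{\mathcal C\times\R}((x_k,r_k),\epi f)$ with $(x_k,r_k)\xrightarrow{\epi f\cap(\mathcal C\times\R)}(\bar x,\bar y)$, $v_k\to w\ne 0$ and $\beta_k\ge 0$ (the sign coming, exactly as in Case~3 of Theorem~\ref{thm2}, from the epigraphical structure). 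After discarding the indices with $\beta_k=0$ and rescaling by $1/\beta_k$, one obtains genuine proximal subgradients $v_k/\beta_k\in\partial^p_{\mathcal C}f(x_k)$ at the boundary points whose norms blow up. The delicate point—and the crux of the whole theorem—is to turn this blow-up at \emph{nearby} base points into a violation of the bound on $\partial_{\mathcal C}f(\bar x)$ at the \emph{given} base point; this is precisely the finite-dimensional Lipschitz/horizon dichotomy underlying \cite[Theorem~9.13]{Roc98}, and it is where the argument must exploit the full variational (not merely conical) structure of the epigraphical normal cone rather than the two supremum formulas alone. I would therefore handle this implication with the greatest care, since it is the one step that does not reduce to a formal manipulation of Theorems~\ref{thm1} and~\ref{thm2}.
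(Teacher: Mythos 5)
Your handling of (i)$\Leftrightarrow$(iii), of the exact-constant ``master formula'', and of the implications (iii)$\implies$(ii) and (iii)$\implies$(iv) is sound and is precisely the route the paper intends (the paper gives no written proof at all beyond the phrase ``From Theorems~\ref{thm1} and \ref{thm2}, we obtain...''; your reconstruction is more detailed than the original). The genuine problem is the step you yourself single out as the crux and leave open: (ii)$\implies$(iii). This step is not merely delicate --- it is false, so no amount of care can close it. Take $n=1$, $\mathcal C=\R$ (so that $N_{\mathcal C\times\R}$ reduces to the ordinary limiting normal cone and $\partial_{\mathcal C}f$, $\partial^{\infty}_{\mathcal C}f$ to the ordinary limiting and horizon subdifferentials), $\bar x=0$, and
$$
f(x):=\begin{cases}
0 & \text{ \rm if } x\le 0,\\
-\sqrt{x} & \text{ \rm if } x>0.
\end{cases}
$$
Then $f$ is continuous, $\epi f$ is closed, and a direct computation gives $N((0,0),\epi f)=\R_+(0,-1)\cup\R_+(-1,0)$: the proximal normal cones at nearby boundary points are $\R_+(0,-1)$ for $x<0$ and $\R_+\big(-1/(2\sqrt{x}),-1\big)$ for $x>0$, and any convergent sequence drawn from the latter rays must have vanishing second coordinate, so after passing to the limit those rays contribute only the horizon direction $(-1,0)$. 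Hence $\partial_{\mathcal C}f(0)=\{0\}$, so (ii) holds with $\kappa=1$, while $\partial^{\infty}_{\mathcal C}f(0)=(-\infty,0]\ne\{0\}$ and $f$ is not Lipschitz around $0$ (the quotients $\vert f(x)-f(0)\vert/\vert x\vert=1/\sqrt{x}$ blow up as $x\to 0^+$). So (ii) implies neither (iii) nor (i).

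The example also shows exactly where your contrapositive plan would break down. From $0\ne w\in\partial^{\infty}_{\mathcal C}f(\bar x)$ you do obtain proximal normals $(v_k,-\beta_k)$ at nearby points with $v_k\to w$, $\beta_k\to 0^+$, and (after discarding indices with $\beta_k=0$) proximal subgradients $v_k/\beta_k\in\partial^p_{\mathcal C}f(x_k)$ whose norms blow up. But these blown-up subgradients sit at the \emph{nearby} points $x_k$, admit no convergent subsequence, and therefore contribute nothing to the outer limit defining $\partial_{\mathcal C}f(\bar x)$; rescaled, they merely reproduce the horizon direction $w$ you started from. Pointwise boundedness of the $(-1)$-slice of $N_{\mathcal C\times\R}((\bar x,f(\bar x)),\epi f)$ simply does not constrain its $0$-slice. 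The classical statement \cite[Theorem~9.13]{Roc98} avoids this by requiring local boundedness of the subdifferential \emph{mapping} $x\mapsto\partial f(x)$ near $\bar x$, and that is the repair needed here too: either strengthen (ii) to ``there exist $\kappa>0$ and a neighbourhood $U$ of $\bar x$ such that $\Vert x^*\Vert\le\kappa$ for all $x^*\in\partial_{\mathcal C}f(x)$ and all $x\in\mathcal C\cap U$'', or demote (ii) to a one-way consequence of (i). Since the paper offers no argument for (ii)$\implies$(iii) either, your inability to close this step reflects a defect in the theorem as stated, not a missing idea in your proof; but as written, your proposal (like the paper's assertion) does not establish the claimed chain of equivalences.
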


\begin{corollary}
Let $f:\mathbb R^n\to \bar \R$ be a lower semi-continuous. Let $\dom f$ be convex. We consider the following statements:

 {\rm (i)} $f$ is relatvie locally Lipschitz continuous around $\bar x$.

 {\rm (ii)} There exists $\kappa>0$ such that $\Vert x^*\Vert\le \kappa$ whenever $x^*\in \bar\partial f(\bar x).$

 {\rm (iii)} $\bar\partial^{\infty}f(\bar x)=\{0\}.$

 {\rm (iv)} $\bar\partial f(\bar x)$ is nonempty compact set and $\mathfrak{a}_{\dom f}f(\bar x)=\max\{\Vert x^*\Vert\mid x^*\in \bar\partial f(\bar x)\}.$

 Then   {\rm (i)} $\Leftrightarrow$ {\rm (ii)} $\Leftrightarrow$ {\rm (iii)} $\implies$ {\rm (iv)}.
\end{corollary}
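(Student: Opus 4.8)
The plan is to obtain this Corollary as the direct specialization of the preceding theorem (stated for a general closed convex set $\mathcal C$) to the choice $\mathcal C=\dom f$. By the conventions fixed just before Proposition~\ref{pro4}, the relative subdifferentials are precisely $\bar\partial f(\bar x)=\partial_{\mathcal C}f(\bar x)$ and $\bar\partial^{\infty}f(\bar x)=\partial^{\infty}_{\mathcal C}f(\bar x)$ with $\mathcal C=\dom f$, the relative exact Lipschitzian constant is $\mathfrak a_{\dom f}f(\bar x)$, and, by Definition~\ref{lips-vec}, ``relatively locally Lipschitz continuous around $\bar x$'' means exactly ``locally Lipschitz continuous with respect to $\dom f$ around $\bar x$''. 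Hence statements (i)--(iv) of the Corollary are, word for word, statements (i)--(iv) of the theorem read at $\mathcal C=\dom f$, and the final formula $\mathfrak a_{\dom f}f(\bar x)=\max\{\Vert x^*\Vert\mid x^*\in\bar\partial f(\bar x)\}$ in (iv) is the theorem's formula in that notation. Since the theorem itself is proved by the profile-mapping reduction (the equivalence between local Lipschitz continuity of $f$ with respect to $\mathcal C$ and the Aubin property of $\mathcal E^f$ with respect to $\mathcal C$) together with Theorem~\ref{thm1} and Theorem~\ref{thm2}, the relative case is genuinely subsumed and no new computation is needed.

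First I would check that the hypotheses of the theorem are met for $\mathcal C=\dom f$. The theorem requires $\mathcal C$ closed and convex and $f\in\mathcal F(\bar x)$. Convexity of $\dom f$ is assumed. Lower semicontinuity of $f$ means $\epi f$ is closed, so in particular $\epi f$ is locally closed around $(\bar x,f(\bar x))$, giving $f\in\mathcal F\subset\mathcal F(\bar x)$. Once the closedness of $\dom f$ is in place, the theorem applies verbatim and returns (i)$\Leftrightarrow$(ii)$\Leftrightarrow$(iii)$\Rightarrow$(iv), which is exactly the asserted chain of implications.

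The delicate point, and the step I would treat with care, is the closedness of $\dom f$. It is needed on two counts: the limiting normal cone with respect to a set, hence $\bar\partial f$ and $\bar\partial^{\infty}f$, is defined in Definition~\ref{normal-cone-wrt} only for a closed convex reference set; and the preceding theorem carries the closedness assumption on $\mathcal C$. Lower semicontinuity alone does not force $\dom f$ to be closed, so this must be read as part of the standing data of the relative setting, under which $\bar\partial f$ is even well defined. With $\dom f$ taken closed and convex, the rest is immediate: the two equivalences and the final implication, including the computation of $\mathfrak a_{\dom f}f(\bar x)$ in (iv), are precisely what the theorem supplies at $\mathcal C=\dom f$.
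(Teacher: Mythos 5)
Your proposal matches the paper's own treatment: the corollary is obtained precisely by specializing the preceding theorem to $\mathcal C=\dom f$, with the relative subdifferentials $\bar\partial f(\bar x)$, $\bar\partial^{\infty}f(\bar x)$ and the constant $\mathfrak{a}_{\dom f}f(\bar x)$ being the theorem's objects in that notation, and no further argument is given or needed. Your caveat that lower semicontinuity alone does not make $\dom f$ closed --- so closedness must be read as part of the standing data for the relative subdifferentials (and the theorem's hypotheses) to apply --- is a correct and careful observation about a point the paper glosses over.
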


 We next consider how the subdifferentials with respect to a set are applied in providing optimality conditions for optimization problems with geometric constraints. To do this, we need to prove an important result as in the following proposition.

 \begin{proposition}\label{pro5}
Let $f:\mathbb R^n\to \bar \R$ and let $\mathcal C$ be closed convex. Let $\bar x\in \mathcal C\cap\dom f$ and $f\in \mathcal F(\bar x).$ Then
\begin{align}
    \partial^p_{\mathcal C}f(\bar x)&=\Big\{x^*\mid \exists p,\delta,\theta>0: \bar x+px^*\in \mathcal C,\nonumber\\
    &\qquad \qquad \langle x^*, x-\bar x\rangle -(f(x)-f(\bar x)) \le \delta\Vert x-\bar x\Vert^2\; \forall x\in \mathcal C\cap\B(\bar x,\theta)\Big\}.\label{lem3-eqii}
\end{align}
\end{proposition}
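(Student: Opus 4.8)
The plan is to prove the two inclusions separately, taking Proposition~\ref{pro4}~(ii) as the starting point, since it already characterizes $\partial^p_{\mathcal C}f(\bar x)$ by a \emph{global} inequality on $\mathcal C$ that carries the extra quadratic term $\frac{1}{2p}(f(x)-f(\bar x))^2$. The whole content of the proposition is to trade this global-but-heavier inequality for a \emph{local} one in which the $(f(x)-f(\bar x))^2$ term has been absorbed into a single constant $\delta$. Throughout I would abbreviate $g(x):=\langle x^*,x-\bar x\rangle-(f(x)-f(\bar x))$, so that both sets are described by a bound of the form $g(x)\le(\text{const})\Vert x-\bar x\Vert^2$ together with the condition $\bar x+px^*\in\mathcal C$. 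Note that for $x\notin\dom f$ one has $g(x)=-\infty$, so all the inequalities below hold trivially there and only $x\in\mathcal C\cap\dom f$ needs attention.

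For the inclusion $\supseteq$ (the right-hand set is contained in $\partial^p_{\mathcal C}f(\bar x)$), I would take $x^*$ in the right-hand set, obtaining $p,\delta,\theta>0$ with $\bar x+px^*\in\mathcal C$ and $g(x)\le\delta\Vert x-\bar x\Vert^2$ on $\mathcal C\cap\B(\bar x,\theta)$, and then verify the defining condition $(x^*,-1)\in N^p_{\mathcal C\times\R}((\bar x,f(\bar x)),\epi f)$ directly through Proposition~\ref{pro1}~(ii), formula~\eqref{prox-nor-cone-eq2a}, applied to $\Omega=\epi f$ and $\mathcal C\times\R$. For an epigraphical point $(x,r)$ with $x\in\mathcal C$ near $(\bar x,f(\bar x))$ one has $r\ge f(x)$, hence $\langle(x^*,-1),(x,r)-(\bar x,f(\bar x))\rangle=\langle x^*,x-\bar x\rangle-(r-f(\bar x))\le g(x)\le\delta\Vert x-\bar x\Vert^2\le\delta\Vert(x,r)-(\bar x,f(\bar x))\Vert^2$; combined with $\bar x+px^*\in\mathcal C$ this is exactly~\eqref{prox-nor-cone-eq2a}. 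This direction is routine.

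For the reverse inclusion $\subseteq$, I would take $x^*\in\partial^p_{\mathcal C}f(\bar x)$ and invoke Proposition~\ref{pro4}~(ii) to get $p>0$ with $\bar x+px^*\in\mathcal C$ and, with $\delta_0:=\tfrac{1}{2p}$, the bound $g(x)\le\delta_0\Vert x-\bar x\Vert^2+\delta_0(f(x)-f(\bar x))^2$ for all $x\in\mathcal C$. The device for removing the square is to re-express $f(x)-f(\bar x)=\langle x^*,x-\bar x\rangle-g(x)$, so that $(f(x)-f(\bar x))^2\le 2\Vert x^*\Vert^2\Vert x-\bar x\Vert^2+2g(x)^2$; substituting produces the self-referential estimate $g(x)\le\delta_0(1+2\Vert x^*\Vert^2)\Vert x-\bar x\Vert^2+2\delta_0\,g(x)^2$. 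It then remains to absorb the $g(x)^2$ term, which is legitimate only when $g(x)$ is small and nonnegative, and this is exactly where the main obstacle lies.

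The obstacle is that $f$ is merely lower semicontinuous, so $f(x)$ may jump upward as $x\to\bar x$ and the square term cannot be controlled pointwise by local Lipschitz estimates. The resolution is to exploit lower semicontinuity through $g$ itself: since $f\in\mathcal F(\bar x)$ is lower semicontinuous at $\bar x$ and $\langle x^*,x-\bar x\rangle\to 0$, one gets $\limsup_{x\to\bar x}g(x)\le 0$, so there is $\theta>0$ with $g(x)\le\tfrac{1}{4\delta_0}$ on $\mathcal C\cap\B(\bar x,\theta)$. On this neighbourhood I would split cases, setting $\delta':=2\delta_0(1+2\Vert x^*\Vert^2)$: if $g(x)\le 0$ the target $g(x)\le\delta'\Vert x-\bar x\Vert^2$ holds trivially (this also disposes of the upward jumps, where $g(x)$ is negative); if $0<g(x)\le\tfrac{1}{4\delta_0}$, then $2\delta_0\,g(x)^2\le\tfrac12 g(x)$, and the self-referential estimate rearranges to $\tfrac12 g(x)\le\delta_0(1+2\Vert x^*\Vert^2)\Vert x-\bar x\Vert^2$, i.e. $g(x)\le\delta'\Vert x-\bar x\Vert^2$. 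Together with $\bar x+px^*\in\mathcal C$ this places $x^*$ in the right-hand set and completes the proof. The key insight making the whole argument hypothesis-free is precisely that working with $g$ turns the dangerous upward jumps of $f$ into the harmless sign case $g(x)\le 0$.
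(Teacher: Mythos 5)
Your proof is correct, but it takes a genuinely different route from the paper's. The paper proves \eqref{lem3-eqii} by a chain of set equalities: it applies Proposition~\ref{pro1}~(ii) to split $N^p_{\mathcal C\times\R}((\bar x,f(\bar x)),\epi f)$ into $N^p((\bar x,f(\bar x)),\epi f_{\mathcal C})$ intersected with the condition $\exists p>0:\ \bar x+px^*\in\mathcal C$, identifies the first factor with the ordinary proximal subdifferential $\partial^p f_{\mathcal C}(\bar x)$, and then cites the classical local characterization of proximal subgradients, \cite[Theorem~2.5]{CLSW98}, applied to $f_{\mathcal C}$; unwinding the definition of $f_{\mathcal C}$ gives \eqref{lem3-eqii}. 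You instead prove the two inclusions directly: your easy inclusion, checked against \eqref{prox-nor-cone-eq2a} on the epigraph, is sound, and in the hard inclusion you reprove by hand precisely the analytic content that the paper outsources to \cite[Theorem~2.5]{CLSW98}. Starting from the global inequality of Proposition~\ref{pro4}~(ii), you eliminate the term $\frac{1}{2p}(f(x)-f(\bar x))^2$ by writing $f(x)-f(\bar x)=\langle x^*,x-\bar x\rangle-g(x)$, obtaining the self-referential bound $g(x)\le\delta_0(1+2\Vert x^*\Vert^2)\Vert x-\bar x\Vert^2+2\delta_0\,g(x)^2$, and absorbing the $g(x)^2$ term on a ball where $g\le\frac{1}{4\delta_0}$; the sign split $g\le 0$ versus $0<g\le\frac{1}{4\delta_0}$ correctly neutralizes upward jumps of $f$, and the localization is exactly where the standing assumption on $f$ enters. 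What each approach buys: the paper's argument is shorter and stays at the level of set identities, but leans on an external theorem about ordinary proximal subdifferentials; yours is self-contained, makes visible where lower semicontinuity is used, and in effect re-derives the CLSW-type equivalence in the with-respect-to-a-set setting. One point worth a line in your write-up: lower semicontinuity of $f$ \emph{at} $\bar x$ (i.e.\ $\liminf_{x\to\bar x}f(x)\ge f(\bar x)$, whence $\limsup_{x\to\bar x}g(x)\le 0$) is not the paper's literal definition of $f\in\mathcal F(\bar x)$, which is local closedness of $\epi f$ around $(\bar x,f(\bar x))$; the implication is standard (if $f(x_k)\to c<f(\bar x)$ with $x_k\to\bar x$, then for small $\epsilon>0$ the points $(x_k,f(\bar x)-\epsilon)\in\epi f$ lie eventually in a closed truncation of $\epi f$ and converge to $(\bar x,f(\bar x)-\epsilon)\notin\epi f$, a contradiction), but it should be stated rather than assumed.
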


\begin{proof}
We have
\begin{align}
\partial^p_{\mathcal C}f(\bar x)&=\left\{x^*\mid (x^*,-1)\in N^p_{\mathcal C\times \R}((\bar x,f(\bar x)),\epi f)\right\}\nonumber\\
&=\left\{x^*\mid (x^*,-1)\in N^p((\bar x,f(\bar x)),\epi f_{\mathcal C})\cap\{u\mid \exists p>0: \bar x+pu\in\mathcal C\}\right\}\label{pro5-eq1}\\
&=\left\{x^*\mid x^*\in \partial^pf_{\mathcal C}(\bar x)\cap\{u\mid \exists p>0: \bar x+pu\in\mathcal C\}\right\}\nonumber\\
&=\Big\{x^*\mid \exists p,\delta,\theta>0: \bar x+px^*\in \mathcal C,\nonumber\\
    &\qquad \qquad \langle x^*, x-\bar x\rangle -(f_{\mathcal C}(x)-f_{\mathcal C}(\bar x)) \le \delta\Vert x-\bar x\Vert^2\; \forall x\in \B(\bar x,\theta)\Big\} \label{pro5-eq2}\\
&=\Big\{x^*\mid \exists p,\delta,\theta>0: \bar x+px^*\in \mathcal C,\nonumber\\
    &\qquad \qquad \langle x^*, x-\bar x\rangle -(f(x)-f(\bar x)) \le \delta\Vert x-\bar x\Vert^2\; \forall x\in \mathcal C\cap\B(\bar x,\theta)\Big\}. \label{pro5-eq3}
\end{align}
Here  \eqref{pro5-eq1} implies from Proposition~\ref{pro1}~(ii) and \eqref{pro5-eq2} holds due to \cite[Theorem~2.5]{CLSW98}, while \eqref{pro5-eq3} is obtained by the definition of $f_{\mathcal C}.$ Hence, the proof is completed.
\end{proof}

We consider the following optimization problem:
\begin{equation}\label{optimprob1}
\min f(x) \text{ \rm such that } x\in \mathcal C,
\end{equation}
where $f:\mathbb R^n\to \bar\R$ and $\mathcal C$ is a closed convex set.

An element $\bar x\in \dom f$ is called a {\it local solution} to \eqref{optimprob1} if there exists $r>0$ such that
$$
f(\bar x)\le f(x)\; \text{ \rm for all } x\in \mathcal C\cap\B(\bar x,r).
$$
In the case of $\mathcal C=\dom f,$ we say that $\bar x$ is a {\it local minimizer} of  $f$.

The following theorem provides necessary conditions for a local solution to problem \eqref{optimprob1} due to the proximal and limiting subdifferentials with respect to $\mathcal C$ of $f$.

\begin{theorem}\label{thm3}
Let $f:\mathbb R^n\to \bar \R$, and let $\mathcal C$ be closed convex. Let $\bar x\in \mathcal C$ and $f\in \mathcal F(\bar x).$ If $\bar x$ is a local solution of \eqref{optimprob1}, then
$$
0\in \partial^p_{\mathcal C}f(\bar x)\subset \partial_{\mathcal C}f(\bar x).
$$
Consequently, if $\mathcal C=\dom f$, then $0\in \bar\partial^pf(\bar x)\subset \bar\partial f(\bar x)$ whenever $\bar x$ is local minimizer of $f.$
\end{theorem}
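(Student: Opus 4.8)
The plan is to prove the two relations $0 \in \partial^p_{\mathcal C}f(\bar x)$ and $\partial^p_{\mathcal C}f(\bar x) \subset \partial_{\mathcal C}f(\bar x)$ separately, and then to read off the final statement by specializing to $\mathcal C = \dom f$.

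For the membership $0 \in \partial^p_{\mathcal C}f(\bar x)$, I would apply the local characterization of the proximal subdifferential with respect to a set provided by Proposition~\ref{pro5}, testing the candidate subgradient $x^* = 0$. The requirement $\bar x + p\cdot 0 = \bar x \in \mathcal C$ is satisfied automatically for every $p>0$ because $\bar x \in \mathcal C$, and with $x^*=0$ the defining inequality collapses to $f(\bar x) - f(x) \le \delta \Vert x - \bar x\Vert^2$ for all $x \in \mathcal C \cap \mathbb B(\bar x,\theta)$. Since $\bar x$ is a local solution, there is an $r>0$ with $f(\bar x) \le f(x)$ on $\mathcal C \cap \mathbb B(\bar x,r)$, so the left-hand side is nonpositive there; choosing $\theta = r$ together with any $\delta, p > 0$ verifies the inequality and yields $0 \in \partial^p_{\mathcal C}f(\bar x)$.

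For the containment $\partial^p_{\mathcal C}f(\bar x) \subset \partial_{\mathcal C}f(\bar x)$, I would first record the general fact that $N^p_{\mathcal C'}(z,\Omega) \subset N_{\mathcal C'}(z,\Omega)$ at any admissible point $z \in \Omega \cap \mathcal C'$. This follows straight from Definition~\ref{normal-cone-wrt}~(ii): the constant sequence $z_k \equiv z$ is admissible in the outer limit $\Limsup_{x \xrightarrow{\Omega \cap \mathcal C'} z} N^p_{\mathcal C'}(x,\Omega)$, so any $z^* \in N^p_{\mathcal C'}(z,\Omega)$, taken together with the constant sequence $z_k^* \equiv z^*$, lies in $N_{\mathcal C'}(z,\Omega)$. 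Applying this with $\Omega = \epi f$, $\mathcal C' = \mathcal C \times \R$ (closed convex, and $\epi f$ locally closed around $(\bar x,f(\bar x))$ since $f \in \mathcal F(\bar x)$), and $z = (\bar x,f(\bar x))$ gives $N^p_{\mathcal C\times\R}((\bar x,f(\bar x)),\epi f) \subset N_{\mathcal C\times\R}((\bar x,f(\bar x)),\epi f)$; extracting the $(x^*,-1)$ slices and comparing with the definitions of $\partial^p_{\mathcal C}f(\bar x)$ and $\partial_{\mathcal C}f(\bar x)$ delivers the inclusion. The consequential statement is then immediate on taking $\mathcal C = \dom f$ and invoking the notational convention $\bar\partial^p f = \partial^p_{\dom f}f$, $\bar\partial f = \partial_{\dom f}f$.

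The argument is essentially routine, so I do not anticipate a substantive obstacle; the only point requiring mild care is the bookkeeping in the first step, namely matching the radius $\theta$ in Proposition~\ref{pro5} to the optimality radius $r$ and confirming that $(\bar x,f(\bar x)) \in \epi f \cap (\mathcal C\times\R)$ so that the normal-cone machinery legitimately applies.
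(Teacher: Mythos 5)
Your proof is correct and follows essentially the same route as the paper: both establish $0\in\partial^p_{\mathcal C}f(\bar x)$ by testing $x^*=0$ in the characterization of Proposition~\ref{pro5} on the optimality neighborhood, with $\bar x+p\cdot 0=\bar x\in\mathcal C$ handling the set condition. The only difference is that you also spell out the inclusion $\partial^p_{\mathcal C}f(\bar x)\subset\partial_{\mathcal C}f(\bar x)$ via constant sequences in the outer limit defining $N_{\mathcal C\times\R}((\bar x,f(\bar x)),\epi f)$, a step the paper leaves implicit.
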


\begin{proof}
Let $\bar x\in \mathcal C$ be a local solution to \eqref{optimprob1}. Then there exists $\theta>0$ such that
$$
f(x)\ge f(\bar x)\; \text{ \rm for all } x\in \mathcal C\cap \B(\bar x,\theta),
$$
which implies that
$$
\langle 0, x-\bar x\rangle -(f(x)-f(\bar x))\le 0\le (\Vert x-\bar x\Vert^2+\vert f(x)-f(\bar x)\vert^2)\; \forall x\in \mathcal C\cap \B(\bar x,\theta).
$$
Since $\bar x+1.0=\bar x\in \mathcal C,$ one has that $0\in \partial^p_{\mathcal C}f(\bar x)$, which is due to Proposition~\ref{pro5}~(ii) with $p=\delta=1$ and $x^*=0$.
\end{proof}

\begin{remark}
It implies from Proposition~\ref{pro4}~(ii) that $\bar\partial^p f(\bar x)\subset \partial^p f(\bar x)$. Thus $\bar\partial f(\bar x)\subset \partial f(\bar x)$, so  inclusion $0\in \bar\partial f(\bar x)$ is sharper than $0\in \partial f(\bar x).$ The two following examples present how the limiting subdifferential with respect to a set more effective than the limiting subdiferential and the projectional subdifferential at identifying the local minimizer of the problem \eqref{optimprob1}.
\end{remark}

\begin{example}
Consider the function $f:\mathbb R\to \mathbb R$ is defined by
$$
f(x)=\begin{cases}
x^2& \text{ \rm if } x\ge 0,\\
x& \text{ \rm otherwise.}
\end{cases}
$$
It is easy to see that $0\in \partial f(0)=\{0,1\}.$ Now, we divide $\dom f=\R$ by two parts $\mathcal C_1=[0,\infty)$ and $\mathcal C_2=(-\infty,0].$ It is clear that $\bar x:=0$ is local minimizer of $f$ if and only if it is local solution to both two problems: $\min\limits_{x\in \mathcal C_1} f(x)$  and $\min\limits_{x\in\mathcal C_2} f(x).$

Note that $\bar x=0$ is a local solution to problem $\min_{x\in \mathcal C_1}f(x)$ and it holds that $0\in \partial_{\mathcal C_1}f(0)=\{0\}.$ However, $0\notin \partial_{\mathcal C_2}f(0)=\{1\}.$ Thus $\bar x=0$ is not a local solution to $\min_{x\in \mathcal C_2}f(x)$, so $\bar x=0$ is not a local minimizer of $f$.
In fact, $\bar x=0$ is not really a local minimizer of $f.$
\end{example}

\begin{example}
    Consider the function $f:\mathbb R\to \bar\R$  defined by
$$
f(x)=\begin{cases}
-x& \text{ \rm if } x\ge 0,\\
\infty& \text{ \rm otherwise.}
\end{cases}
$$
Let $\mathcal C=\dom f=[0,\infty).$ By the directly computation, we have $0\in \partial\vert_{\mathcal C}f(0)=\{0\}.$ However, $0\notin \bar\partial f(0)=\{-1\}$, so $\bar x=0$ is not a local minimizer of $f$ due to Theorem~\ref{thm3}.
\end{example}

\section*{Conclusion} In this paper, we presented the versions with respect to a set of the proximal normal cone, limiting normal cone, limiting coderivative, proximal and some subdifferentials. Based on the limiting coderivative with respect to a set, we  characterized the Aubin property with respect to a set of multifunctions, which is called  the version with respect to a set of the Mordukhovich criterion.
We then applied the subdifferentials with respect to a set to provide necessary optimality conditions for local solutions to optimization problems with geometric constraints. This could  be the basis for achieving optimality conditions for optimization problems with other types of constraints as well as for developing algorithms for solving optimization problems.
\section*{Acknowledgements}
The authors thank Professor Boris Mordukhovich for his valuable comments on the first draft of this paper which significantly  improve the presentation of this paper.

\section*{Compliance with Ethical Standards}

{\bf Data Availability Statement}  Not applicable.

\noindent
{\bf Conflict of Interests/Competing Interests}  The authors declare that they have no conflict of interest.

\noindent
{\bf Funding} This article was supported by the National Natural Science Foundation of China
under Grant No.11401152.  




\end{document}